\theoremstyle{plain}
\newtheorem*{introtheorem}{Theorem}
\newtheorem{theorem}{Theorem}[section]
\newtheorem{proposition}[theorem]{Proposition}
\newtheorem{lemma}[theorem]{Lemma}
\newtheorem{corollary}[theorem]{Corollary}
\newtheorem*{proposition*}{Proposition}
\theoremstyle{definition}
\newtheorem{definition}[theorem]{Definition}
\newtheorem{example}[theorem]{Example}
\theoremstyle{remark}
\newtheorem{remark}[theorem]{Remark}
\newcommand{\secref}[1]{Section~\ref{#1}}
\newcommand{\thmref}[1]{Theorem~\ref{#1}}
\newcommand{\propref}[1]{Proposition~\ref{#1}}
\newcommand{\lemref}[1]{Lemma~\ref{#1}}
\newcommand{\corref}[1]{Corollary~\ref{#1}}
\newcommand{\exref}[1]{Example~\ref{#1}}
\newcommand{\remref}[1]{Remark~\ref{#1}}
\newcommand{\defref}[1]{Definition~\ref{#1}}
\newcommand{\figref}[1]{Figure~\ref{#1}}
\newcommand{\R}{\mathbb{R}}
\newcommand{\Z}{\mathbb{Z}}
\newcommand{\cl}{\mathrm{cl}}
\newcommand{\e}{\mathrm{e}}
\newcommand{\lk}{\mathrm{lk}}
\begin{document}
\tikzset{->-/.style={decoration={
  markings,
  mark=at position .4 with {{->}}},postaction={decorate}}}

\title[Digital Fundamental Groups]{Digital Fundamental Groups and Edge Groups of Clique Complexes}

\author{Gregory Lupton}
\author{Nicholas A. Scoville}

\address{Department of Mathematics, Cleveland State University, Cleveland OH 44115 U.S.A.}

\email{g.lupton@csuohio.edu}

\address{Department of Mathematics and Computer Science, Ursinus College, Collegeville PA 19426 U.S.A.}

\email{nscoville@ursinus.edu}

\date{\today}

\keywords{Digital topology, digital image, fundamental group, edge group, simplicial complex, clique complex, tolerance space, Seifert-van Kampen theorem, finitely presented group,  free group}
\subjclass[2010]{ (Primary) 55Q99 57M05;  (Secondary) 68U10 68R99}

\begin{abstract} In  previous work, we have defined---intrinsically, entirely within the digital setting---a fundamental group for digital images.  Here, we show that this group is isomorphic to the  edge group of the clique complex of the digital image considered as a graph.      The clique complex is a simplicial complex and its edge group is well-known to be isomorphic to the ordinary (topological) fundamental group of its geometric realization.  This  identification of our intrinsic digital  fundamental group with a topological fundamental group---extrinsic to the digital setting---means that many familiar facts about the ordinary fundamental group may be translated into their counterparts for the digital fundamental group:    The digital fundamental group of any digital circle is $\Z$; a version of the Seifert-van Kampen Theorem holds for our digital fundamental group; every finitely presented group occurs as the (digital) fundamental group of some digital image.  We also show that the (digital) fundamental group of every 2D digital image is a free group.
\end{abstract}

\maketitle

\section{Introduction}

A \emph{digital image} $X$ is a finite subset $X \subseteq \Z^n$ of the integral lattice in some $n$-dimensional Euclidean space, together with
a particular adjacency relation on the set of points.
 This is an abstraction of an actual digital image which consists of  pixels (in the plane, or higher dimensional analogues of such).
\emph{Digital topology} refers to the use of notions and methods from (algebraic) topology to study digital images.
The idea in doing so is that such notions can provide useful theoretical background for certain steps of image processing, such as  contour filling, border and boundary following, thinning, and feature extraction or recognition (e.g. see p.273 of \cite{Ko-Ro91}).  There is an extensive literature on digital topology (e.g. \cite{Ro86, Bo99, Evako2006}).
As a contribution to this literature,  in \cite{LOS19c, LOS19a, LOS19b} we have started to  build a  general ``digital homotopy theory" that brings the full strength of homotopy theory to the digital setting.  In \cite{LOS19c}  we focussed on the fundamental group.  Our definition of the digital fundamental group  in  \cite{LOS19c}---see below for a r{\'e}sum{\'e}---is \emph{intrinsic}, in the sense that it is defined directly in terms of a digital image, using ingredients such as homotopy of based loops defined within the digital setting.  Indeed, a crucial component of our development in  \cite{LOS19c} involves the notion of \emph{subdivision} of a digital image---a construction that relies on the ``cubical" setting of the integer lattice and which does not translate out of the digital setting in any obvious way.   One of the main results of \cite{LOS19c} shows  that this process of subdivision preserves the fundamental group of  a digital image (Th.3.16 of  \cite{LOS19c}).

In this paper, we make significant advances on the development of \cite{LOS19c}.  The main result is the following.

\begin{introtheorem}[\thmref{thm: digital pi1 = edge pi1}]
Let $X$ be a digital image and $\cl(X)$ its clique complex.  The digital fundamental group of $X$, as defined in \cite{LOS19c}, is isomorphic to the edge group of $\cl(X)$.
\end{introtheorem}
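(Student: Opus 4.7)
The plan is to set up a natural map between the two groups at the level of loops/edge paths and then analyze it step by step. A digital loop in $X$ based at $x_0$ is a sequence of points $x_0, x_1, \dots, x_n = x_0$ in which consecutive entries are either equal or adjacent; since adjacency in $X$ is exactly the condition for spanning a 1-simplex in $\cl(X)$, such a sequence is verbatim an edge path in $\cl(X)$. I would define
\[
\Phi : \pi_1^{\mathrm{dig}}(X, x_0) \longrightarrow E(\cl(X), x_0)
\]
on representatives by $\Phi([\sigma]) = [\sigma]_E$. Concatenation of loops on either side is formal juxtaposition, so $\Phi$ is a group homomorphism once well-definedness is established, and it is visibly surjective, since every edge path in $\cl(X)$ is already a digital loop in $X$.

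The heart of the proof is the comparison of elementary moves. Recall that the edge group is generated by edge paths modulo (a) stutter insertions/deletions $u\,u \sim u$ and (b) triangle moves $u\,v\,w \sim u\,w$ whenever $\{u,v,w\}$ spans a 2-simplex; and because $\cl(X)$ is a \emph{clique} complex, such a 2-simplex is nothing more than a triple of pairwise adjacent points of $X$. For well-definedness, I would show that any digital homotopy---which, by the construction in \cite{LOS19c}, lives on a sufficiently subdivided image $X^{(k)}$---reduces, after analysing its ``grid'' components, to a finite sequence of stutters and triangle moves on the corresponding edge path in $\cl(X^{(k)})$. The natural simplicial map $\cl(X^{(k)}) \to \cl(X)$ induced by collapsing subdivision vertices, together with the subdivision invariance (Th.3.16 of \cite{LOS19c}), should then let us descend these moves to edge-path equivalences in $\cl(X)$ itself.

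For injectivity, I would realise each elementary edge-path move as a digital homotopy. A stutter is already a digital homotopy in the sense of \cite{LOS19c}. The triangle move is the more delicate case: given a loop passing through three pairwise adjacent points $u, v, w \in X$, I would exhibit a digital homotopy between the loops $\cdots u\,v\,w \cdots$ and $\cdots u\,w \cdots$, very likely after passing once to $X^{(k)}$, where the midpoint-type vertices sitting inside the cube supporting $\{u,v,w\}$ provide the extra adjacencies needed to construct an explicit grid-homotopy ``filling the triangle.''

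The main obstacle I anticipate is the well-definedness step: unpacking the subdivision-based definition of digital homotopy into an explicit finite list of combinatorial moves and checking that each lands in the equivalence relation generated by stutters and triangle shortcuts. This requires tight control over how cliques of $X^{(k)}$ relate to cliques of $X$, and a careful argument that the induced simplicial map of clique complexes behaves well on edge paths. Once that bookkeeping is in place, the triangle-realisation used for injectivity should be a localised constructive argument using the same extra adjacencies supplied by subdivision, and the whole scheme collapses to a clean two-sided inverse between $\pi_1^{\mathrm{dig}}(X, x_0)$ and $E(\cl(X), x_0)$.
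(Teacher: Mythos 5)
Your map $\Phi$ is exactly the paper's $\phi$, and your overall architecture (surjectivity is immediate, injectivity by realising each elementary edge move as a digital homotopy, well-definedness by decomposing a digital homotopy into stutters and triangle moves) matches the paper's. But there is a genuine gap, and it stems from a misreading of the definition of the fundamental group in \cite{LOS19c}: subdivision there is applied only to the \emph{domain intervals} $I_M$ (the reparametrizations $\alpha\circ\rho_k\colon I_{kM+k-1}\to X$), never to the image $X$ itself. There is no $X^{(k)}$ anywhere in the definition, so your proposed route through $\cl(X^{(k)})$, a ``collapsing'' simplicial map $\cl(X^{(k)})\to\cl(X)$, and Th.3.16 of \cite{LOS19c} is both unnecessary and unsupported --- you would have to prove that such a collapsing map exists, is simplicial, and carries edge homotopies to edge homotopies, which is essentially as hard as the original problem. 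The reparametrizations $\alpha\circ\rho_k$ that do occur are just trivial extensions of $\alpha$ (repeating each value $k$ times), and these are absorbed by type (a) stutter moves alone; that is \lemref{lem: htpy vs edge htpy}(i).

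The step you flag as ``the main obstacle'' is indeed the crux, and you have not supplied the idea that resolves it. The paper's resolution (\lemref{lem: htpy vs edge htpy}(ii)) is: given a based homotopy $H\colon I_M\times I_N\to X$ between loops of the same length, replace it by a ``slower'' homotopy $G\colon I_M\times I_{MN}\to X$ that changes at most \emph{one} value of the loop per time step; then a single-value change $\eta\mapsto\eta'$ at position $S$ is realised by inserting $\eta'(S)$ after $\eta(S-1)$ and deleting $\eta(S)$, each move being legal because continuity of $G$ forces $\{G(S-1,t),G(S,t),G(S,t+1)\}$ and $\{G(S,t+1),G(S,t),G(S+1,t)\}$ to be $3$-cliques of $X$, hence $2$-simplices of the clique complex. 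Likewise, for injectivity the type (b) triangle move needs no subdivision at all: since $\{\alpha(j),v,\alpha(j+1)\}$ are pairwise adjacent in $X$, the explicit one-step homotopy $I_{M+1}\times I_1\to X$ sending the repeated value $\alpha(j)$ to $v$ is already continuous in $X$. Your instinct to ``pass once to $X^{(k)}$'' for this step would again import machinery the argument does not need. One further small point: digital concatenation pauses for a unit interval at the basepoint, so $\e(\alpha\cdot\beta)$ and $\e(\alpha)\cdot\e(\beta)$ differ by one stutter; this is harmless but must be said for $\Phi$ to be a homomorphism.
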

See below for descriptions of  the clique complex and of the edge group of a simplicial complex.  Now it is known that the edge group of a simplicial complex is isomorphic to the fundamental group---in the ordinary, topological sense---of the spatial realization of the simplicial complex (see \cite[Th.3.3.9]{Mau96}, repeated as \thmref{thm: edge gp = pi1} here).
It follows that the relatively unfamiliar digital fundamental group may be identified with the much more familiar topological fundamental group of a space that is associated to the digital image in a fairly transparent way.  With this identification we may, with care over one or two technical points, translate many known results about the topological fundamental group  into their counterparts for the digital fundamental group.  Doing so adds greatly to our understanding of the digital fundamental group.

An overview of the organization of the paper and our results follows.   \secref{sec: basics} summarizes some basics of digital topology and our definition of the digital fundamental group from \cite{LOS19c}.  We have tried to keep this material to the minimum necessary for understanding our results here, and refer to \cite{LOS19c} for fuller details.
In \secref{sec: based homotopy} we give two technical results about relative homotopy of paths or loops.  These results were not included in  \cite{LOS19c}, so we prove them here  since they are needed in the sequel.  \secref{sec: edge groups} contains our main result.  We review clique complexes and edge groups, and prove the isomorphism asserted in the Theorem above.   In \secref{sec: first consequences}	 we begin to draw consequences from this Theorem.  In \thmref{thm: pi of C is Z} we show that the digital fundamental group of any digital circle is $\Z$ (we define what we mean by digital circles in \defref{def: circle}).  In \thmref{thm: DVK} we deduce a version of the Seifert-van Kampen theorem for the digital fundamental group.  The conclusion is the same as the topological theorem, but we require an extra (mild) hypothesis in addition to the usual connectivity hypotheses.  We use  this result to give concrete examples of digital images with interesting fundamental groups.  \exref{ex: DD} shows that a one-point union of two digital circles has non-abelian digital fundamental group (a free group on two generators, in fact).
\exref{ex: projective plane} shows that a certain digital image---which we construct as a ``digital projective plane"---has torsion in its digital fundamental group (which is $\Z_2$, in fact).
These examples are deduced from special cases of our digital Seifert-van Kampen theorem (\corref{cor: contractible U cap V} and \corref{cor: contractible V}).  To the best of our knowledge, these are the first examples given of digital images with fundamental group---in any sense---that is not free abelian.  More generally, we are able to realize any finitely presented group as the digital fundamental group of some digital image in Theorem \ref{thm: fg realization}.  In the final \secref{sec: 2D free}, we show that the digital fundamental group of every 2D digital image is a free group.  This result does not follow automatically from  the isomorphism of \thmref{thm: digital pi1 = edge pi1}.  Rather, we establish it after some preliminary results in \secref{sec: 2D free} about shortening of paths that are of interest in their own right.

The fundamental group is not new in digital topology (see \cite{Kong89, Bo99}, for example).  But our approach and development in \cite{LOS19c} and here differs from versions previously used in digital topology.  We give some discussion of these differences now.  As we pointed out in \cite{LOS19c}, our fundamental group differs from that of \cite{Bo99} for basic examples of digital images.  This difference derives from differences in the notion of homotopy, and is explained in some detail in \cite{LOS19c}.
Ayala et al.~\cite{ADFQ03} work in a setting in which digital images have extra structure that our notion of digital image does not have \emph{a priori}.   By making different choices of their ``weak lighting function," for example, one can arrive at different notions of a fundamental group that on a digital circle take  $\Z$ or the trivial group.    Furthermore, \cite{ADFQ03}  does not actually define a fundamental group in the digital setting.  Rather, their ``digital" fundamental group is defined \emph{extrinsically} to be the edge group of an auxiliary complex; they do not work in terms of loops and homotopies in the actual digital image itself, as we do.
A digital image in our sense only conforms to one of the general ``device models" considered in \cite{ADFQ03}, namely, the \emph{standard cubical decomposition of Euclidean $n$-space} $\R^n$. Working within that device model, and using $(3^n-1)$-adjacency in $\Z^n$, as we do consistently, we do not know whether it is possible to make a uniform, once and for all, choice of extra structure  for which the corresponding fundamental group of \cite{ADFQ03}  determined by such a choice agrees with our fundamental group.   If not, then our notions of fundamental group are basically different. But even if it were,  it is unlikely that such a matching would extend to any other aspects of our more general digital homotopy theory.  For example, maps of digital images and homotopies of them do not appear to be discussed in the body of work surrounding \cite{ADFQ03}.

We end this introduction by mentioning a more general notion than that of a digital image to which many of our results apply.
A \emph{tolerance space} is a set with a symmetric, reflexive binary relation (which we interpret as an adjacency relation on  the points of the set).  Poston (in \cite{Po71}) referred to the use of notions from (algebraic) topology in a tolerance space setting as \emph{fuzzy geometry}, and used ``fuzzy" terminology throughout.  Sossinsky, however,  makes a sharp distinction between  tolerance spaces and more general ``fuzzy mathematics" (see \S5, `Tolerance is Crisp, Not Fuzzy,' of \cite{So86}).  For a recent, detailed history of tolerance spaces together with further examples of applications of tolerance spaces, see \cite{Pet12}.   Every digital image is a tolerance space.  Conversely, every finite tolerance space may be embedded in some $\Z^n$ as a digital image, preserving the adjacencies (we explain how in \propref{prop: digital graph}  below).  But there may be many ways to ``realize" a given tolerance space as a digital image.  Thus, a digital image may be thought of as a tolerance space together with a particular choice of embedding into some $\Z^n$.  Our focus is on developing homotopy theory in  the context of digital images.  However, many of our results apply just as well to tolerance spaces.  The main difference between the two concepts, from our point of view, concerns subdivision.  Whereas a digital image has canonical subdivisions (that are defined in terms of the ambient $\Z^n$), a tolerance space does not.  One can always embed a tolerance space as a digital image in some $\Z^n$, and then use the subdivisions for that dimension, but there is no canonical choice of such.  Generally speaking, then, results that we prove about a digital image $X$ may be interpreted equally well as results about a general tolerance space $X$, so long as the proofs do not involve subdividing $X$.   Examples of this include the results of \cite{LOS19c} through Theorem 3.15---including the definition of the fundamental group, its independence of the choice of basepoint, and its behaviour with respect to products.  Also, \thmref{thm: digital pi1 = edge pi1} of this paper and its consequences in \secref{sec: first consequences}	apply equally well to tolerance spaces as to digital images (the proofs involve subdivisions of intervals---the domains of paths and loops, but do not involve subdivisions of the digital image/tolerance space). The result of \secref{sec: 2D free}, on the other hand, is specifically about 2D digital images and would only make sense as a statement about tolerance spaces that may be ``realized" as 2D digital images.

\smallskip

\textbf{Acknowledgements.}  Thanks to John Oprea for many helpful comments on this work.  The second-named author was supported by a travel grant from Ursinus College.   Also,  thanks to  Andrea Bianchi for explaining to one of us (Lupton) the procedure for realizing a finite tolerance space as a digital image, which we give as \propref{prop: digital graph} here.

\section{Digital Topology and a Digital Fundamental Group}\label{sec: basics}

We review some notation and terminology from digital topology, and give a brief summary of our definition of the fundamental group from \cite{LOS19c}.
Because we are dealing with  the fundamental group, our basic object of interest is a \emph{based digital image}, and maps and homotopies will preserve basepoints.

\subsection{Adjacency and Continuity}
A \emph{based digital image} $X$ means a finite subset $X \subseteq \Z^n$ of the integral lattice in some $n$-dimensional Euclidean space, together with a choice of a distinguished point $x_0 \in X$ which we refer to as the \emph{basepoint} of $X$, and the following reflexive, symmetric binary relation on $X$ that we refer to as \emph{adjacency}:  two (not necessarily distinct) points $x = (x_1, \ldots, x_n) \in X$ and  $y = (y_1, \ldots, y_n) \in X$  are adjacent if $|x_i-y_i| \leq 1$ for each $i = 1, \dots, n$.
If $x, y \in X \subseteq \Z^n$, we write $x \sim_X y$ to denote that $x$ and $y$ are adjacent.
We usually suppress the basepoint $x_0$ from our notation unless it is useful to emphasize the particular basepoint.  Thus, we will denote a based digital image $(X, x_0)$ simply as $X$, with  the understanding that there is some choice of basepoint $x_0$.

We use the notation $I_N$ or $[0, N]$  for the \emph{digital interval of length} $N$. Namely, $I_N \subseteq \Z$ consists of the integers from $0$ to $N$ (inclusive)  in $\Z$ where consecutive
integers are adjacent. Thus, we have $I_1 = [0, 1] = \{0, 1\}$, $I_2 = [0, 2] = \{0, 1, 2\}$, and so-on.  Occasionally, we may use $I_0$ to denote the singleton point $\{0\} \subseteq \Z$.
We will consistently choose $0 \in I_N$ as the basepoint of an interval.

For based digital images  $X \subseteq \Z^n$ and $Y \subseteq \Z^m$, a  function $f\colon X \to Y$  is \emph{continuous} if $f(x) \sim_Y f(y)$ whenever $x \sim_Xy$, and is \emph{based} if $f(x_0) = y_0$.
By a \emph{based map} of based digital images, we mean a continuous, based function.

\subsection{Paths, Loops and Homotopies}\label{subsec: review homotopy}
Let $(Y, y_0)$ be a based digital image with $Y \subseteq \Z^n$.  For any  $N \geq 1$, a \emph{based path of length $N$ in $Y$} is a based map $\alpha\colon I_N \to Y$ (with $\alpha(0) = y_0$).  Unlike in the topological setting, where any path may be taken with the fixed domain $[0, 1]$, in the digital setting we must allow paths to have different domains. A \emph{based loop of length $N$} in $Y$ is a based path $\gamma\colon I_N \to Y$ that satisfies $\gamma(0) = \gamma(N) = y_0$.

A based digital image $(X, x_0)$ is \emph{connected} if, for any $x \in X$ there is some based path $\alpha\colon I_N \to X$ (for some $N \geq 0$) with $\alpha(N) = x$.

The product of based digital images $(X, x_0)$ with $X\subseteq \Z^m$ and $(Y, y_0)$ with $Y\subseteq \Z^n$ is $\big(X \times Y, (x_0, y_0)\big)$.  Here,  the Cartesian product  $X \times Y \subseteq \Z^{m} \times \Z^{n} \cong \Z^{m+n}$ has the adjacency relation $(x, y) \sim_{X \times Y} (x', y')$ when $x\sim_X x'$  and $y \sim_Y y'$.

Two based maps of based digital images $f, g\colon X \to Y$ are \emph{based homotopic}  if, for some $N\geq 1$, there is a (continuous) based map
$$H \colon X \times I_N \to Y,$$
with $H(x, 0) = f(x)$ and $H(x, N) = g(x)$, and $H(x_0, t) = y_0$ for all $t=0, \ldots, N$.  Then $H$ is a \emph{based homotopy} from $f$ to $g$, and we write $f \approx g$.

We specialize this to the context of based loops as follows.
Based loops $\alpha, \beta \colon I_M \to Y$ (of the same length) are \emph{based homotopic as based loops} if there is a based homotopy $H\colon I_M \times I_N \to Y$ with $H(0, t) = H(M, t) = y_0$ for all $t \in I_N$. We refer to such a homotopy as a based homotopy of based loops and we write $\alpha \approx \beta$, even though the homotopy is more restrictive here than in the general based sense.  The context should make it clear exactly what we intend our homotopies to preserve.

\subsection{Subdivision of Intervals} In our broader digital homotopy theory program, subdivision of digital images plays a prominent role.  However, for the purposes of this paper we do not need the general notion of subdivision of a digital image.  Rather, we only need subdivision for intervals.  We will restrict ourselves to this particular instance of subdivision here, and refer to our other papers for the more general notion---especially \cite{LOS19b} in which we discuss subdivision of maps as well as of general digital images.

For each $k \geq 2$ and each $N \geq 0$, we have a \emph{standard projection} map
$$\rho_k \colon I_{kN + k-1} \to I_N$$
defined by $\rho_k(i) = \lfloor i/k \rfloor$.  Here,  $\lfloor i/k \rfloor$ denotes the integer part of $i/k$, namely the largest integer less than or equal to $i/k$.  Thus $\rho_k$ aggregates the points of $I_{kN + k-1}$ into groups of $k$ consecutive integers, and sends each aggregate to a suitable point of $I_N$.  The integers $\{0, \ldots, k-1\}$ are sent to $0 \in I_N$,  $\{k, \ldots, 2k-1\}$ are sent to $1$, and so-on.  In \cite{LOS19c} and our other papers, we use notation in the style $S(I_N, k)$, and refer to the $k$-fold subdivision of the interval $I_N$, for what here we are simply taking as the interval  $I_{kN + k-1}$.  We have no need of this general notation here, and so do not adopt it.

Now let $(Y, y_0)$ be a based digital image with $Y \subseteq \Z^n$.
If $\gamma\colon I_N \to Y$ is a based loop in $Y$, then for any $k$,
$$\gamma\circ \rho_k\colon I_{kN + k-1} \to I_N \to Y$$
is also a based loop (of length $kN + k-1$), in that we have  $\gamma\circ \rho_k(0) = \gamma(0) = y_0$ and $\gamma\circ \rho_k(kN+k-1) = \gamma(N) = y_0$.

Geometrically speaking, the composition $\gamma\circ \rho_k$ amounts to a reparametrization of the loop $\gamma$.  The image traced out in $Y$  is the same, but we pause at each point of the loop for an interval of length $k-1$.  This device allows us to compare loops of different lengths, and also provides  flexibility in deforming loops by (based) homotopies.

\subsection{Concatenation of Paths and Loops}
Suppose $\alpha\colon I_M \to Y$ and
$\beta\colon I_N \to Y$ are paths---not necessarily based paths---in $Y$ that satisfy $\alpha(M) \sim_Y \beta(0)$.  Their \emph{concatenation} is  the path $\alpha\cdot\beta\colon I_{M+N+1} \to Y$ of length $M+N+1$ in  $Y$ defined by
\begin{equation}\label{eq: concat}
\alpha\cdot\beta(t) = \begin{cases} \alpha(t) & 0 \leq t \leq M\\  \beta(t-(M+1)) & M+1 \leq t \leq M+N+1.\end{cases}
\end{equation}
If $\alpha(M) = \beta(0)$, then our definition means that we pause for a unit interval when attaching the end of $\alpha$ to the start of $\beta$.

Given two based loops $\alpha\colon I_M \to Y$ and
$\beta\colon I_N \to Y$, we  form their \emph{product} by concatenation:
$$\alpha\cdot\beta\colon I_{M+N+1} \to Y$$
is the based loop of length $M + N +1$ defined by \eqref{eq: concat}.
We pause at  the basepoint for a unit interval when attaching the end of $\alpha$ to the start of $\beta$.    This product of based loops is strictly associative, as is easily checked.

 \subsection{Subdivision-Based Homotopy of Based Loops and the Fundamental Group}
 Two based loops $\alpha \colon I_M \to Y$ and $\beta \colon I_N \to Y$ (generally of different lengths) are \emph{subdivision-based  homotopic as based loops} if, for some $k, l$ with $k, l \geq 1$ and $k(M+1) = l(N+1)$, we have
$$\alpha\circ \rho_k\colon I_{kM + k-1} \to I_M \to Y \quad \text{and} \quad \beta\circ \rho_l\colon I_{lN + l-1}\to I_N \to Y$$
based-homotopic as maps $I_{kM + k-1} = I_{lN + l-1} \to Y$, via a based homotopy of based loops; i.e.,  if we have a homotopy $H\colon I_{kM + k-1} \times I_R \to Y$ that satisfies $H(s, 0) = \alpha\circ \rho_k(s)$ and  $H(s, R) = \beta\circ \rho_l(s)$, and also $H(0, t) = H(kM + k-1, t) = y_0$ for all $t \in I_R$.

In \cite{LOS19c} we show that subdivision-based homotopy of based loops is an equivalence relation on the set of all based loops (of all lengths)  in $Y$.
Denote by $[\alpha]$ the (subdivision-based homotopy) equivalence class of based loops represented by a based loop $\alpha\colon I_N \to Y$.  Thus, we have $[\alpha] = [\alpha\circ \rho_k]$ for any standard projection $\rho_k\colon I_{kN + k-1} \to I_N$. More generally, we write $[\alpha] = [\beta]$ whenever $\alpha$ and $\beta$ are subdivision-based homotopic as based loops in $Y$.

For $Y\subseteq \Z^n$ a based digital image, denote the set of subdivision-based homotopy equivalence classes of based loops in $Y$ by $\pi_1(Y; y_0)$.
As we show in \cite{LOS19c}, setting
$[\alpha]\cdot[\beta] = [\alpha\cdot\beta]$
for based loops $\alpha\colon I_M \to Y$ and $\beta\colon I_N \to Y$ gives a well-defined product on the set $\pi_1(Y; y_0)$.   This product is associative, since concatenation of based loops itself is associative.
Now for any path $\gamma\colon I_M \to Y$, let $\overline{\gamma} \colon I_M \to Y$ denote the \emph{reverse path}
$\overline{\gamma}(t) = \gamma(M-t)$.  If $\alpha$ is a based loop in $Y$, then so too is its reverse $\overline{\alpha}$.
For any $N \geq 0$, write $C_N \colon I_N \to Y$ for the constant loop defined by $C_N(t) = y_0$ for $0 \leq t \leq N$. Since $C_0\circ \rho_k = C_{k-1} \colon I_{k-1} \to Y$ for any $k$, it follows that all the constant loops $C_N$ represent the same subdivision-based homotopy equivalence class of based loops, which we denote by $\mathbf{e}  \in \pi_1(Y; y_0)$.
Then $\pi_1(Y; y_0)$ is a group, with $\mathbf{e}$ a two-sided identity element and
$[\overline{\alpha}]$  a two-sided inverse element of $[\alpha]$, for each $[\alpha] \in \pi_1(Y, y_0)$.  See  \cite{LOS19c} for details of all this

\section{Results on relative homotopy}\label{sec: based homotopy}

In this section, paths need not be based.  

\begin{definition}
Let $X$ be a digital image and suppose $\alpha, \beta \colon I_M \to X$ are (not-necessarily based) paths of the same length with the same initial point and the same terminal point, so $\alpha(0) = \beta(0)$ and $\alpha(M) = \beta(M)$.  (These need not be the same point, unless we want to consider $\alpha$ and $\beta$ as loops.)   Then we say that $\alpha$ and $\beta$ are \emph{homotopic relative the endpoints} if there is a homotopy $H\colon I_M \times I_T \to X$, for some $T$, that satisfies $H(s, 0) =  \alpha(s)$ and $H(s, T) =  \beta(s)$ for $s \in I_M$, as well as $H(0, t) =  \alpha(0) = \beta(0)$ and  $H(M, t) =  \alpha(M) = \beta(M)$ for $t \in T$.  That is, the endpoints of the paths remain fixed under the homotopy. We use the same notation $\alpha \approx \beta$ for this special kind of homotopy as for the ordinary notion of homotopy (in which the endpoints need not be fixed).  Once again, the context should make it clear what we intend our homotopies to preserve.
\end{definition}

\begin{lemma}\label{lem: homotopy rel endpts}
Suppose $\alpha \approx \alpha' \colon I_M \to X$ and  $\beta \approx \beta \colon I_N \to X$ are paths  in a digital image $X$ and that the homotopies are relative the endpoints.  Suppose  that we have  $\alpha(N) = \alpha'(N) \sim_X \beta(0) = \beta'(0)$,  so that we may form the concatenations  $\alpha\cdot\beta$ and   $\alpha'\cdot\beta'$.  Then we have a homotopy of paths relative the endpoints
$$\alpha\cdot\beta \approx \alpha'\cdot\beta' \colon I_{M+N +1} \to X.$$
If the concatenations are of based loops, then this is a based homotopy of based loops.
\end{lemma}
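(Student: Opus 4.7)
The plan is to build the desired homotopy directly by pasting together (horizontally) the two given homotopies, after first arranging them to have the same time length.

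First I would reconcile the time parameters. Suppose $H_1 \colon I_M \times I_{T_1} \to X$ is a homotopy rel endpoints from $\alpha$ to $\alpha'$ and $H_2 \colon I_N \times I_{T_2} \to X$ is a homotopy rel endpoints from $\beta$ to $\beta'$. Assume without loss of generality that $T_1 \leq T_2$, set $T = T_2$, and extend $H_1$ to $\widetilde H_1 \colon I_M \times I_T \to X$ by declaring $\widetilde H_1(s,t) = \alpha'(s)$ for $t \geq T_1$. Continuity at $t = T_1$ holds because $H_1(s,T_1) = \alpha'(s) \sim_X \alpha'(s)$ by reflexivity of adjacency, and continuity in $s$ at later times just uses that $\alpha'$ is itself continuous. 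The endpoints are still fixed at $\alpha(0)$ and $\alpha(M)$. So I may assume both homotopies have the common time length $T$.

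Next I would define $H \colon I_{M+N+1} \times I_T \to X$ by
\[
H(s,t) \;=\; \begin{cases} H_1(s,t) & 0 \leq s \leq M, \\ H_2(s-(M+1),t) & M+1 \leq s \leq M+N+1, \end{cases}
\]
which reproduces the concatenation formula \eqref{eq: concat} at $t=0$ and $t=T$, so $H(\cdot,0) = \alpha\cdot\beta$ and $H(\cdot,T) = \alpha'\cdot\beta'$.

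Then I would verify continuity of $H$ as a map of digital images, i.e.\ that adjacent points $(s,t) \sim (s',t')$ in $I_{M+N+1}\times I_T$ are sent to adjacent points in $X$. Inside each of the two slabs $\{0 \leq s \leq M\}$ and $\{M+1 \leq s \leq M+N+1\}$ this follows from the continuity of $H_1$ and $H_2$. The only genuine check is along the seam $s = M, s' = M+1$ with $|t-t'| \leq 1$: we have $H(M,t) = H_1(M,t) = \alpha(M) = \alpha'(M)$ for every $t$ (because $H_1$ is rel endpoints), and similarly $H(M+1,t') = H_2(0,t') = \beta(0) = \beta'(0)$ for every $t'$; by hypothesis $\alpha(M) \sim_X \beta(0)$, so the required adjacency holds. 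Finally, $H(0,t) = H_1(0,t) = \alpha(0) = \alpha'(0)$ and $H(M+N+1,t) = H_2(N,t) = \beta(N) = \beta'(N)$ for all $t$, so $H$ is rel endpoints; and if the concatenations are based loops, these fixed endpoints are the basepoint, making $H$ a based homotopy of based loops.

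The only subtle point is the seam check, and that is immediate from the rel-endpoints condition combined with the assumed adjacency $\alpha(M) \sim_X \beta(0)$; everything else is bookkeeping. Extending the shorter homotopy to match time lengths is the single place where one uses reflexivity of adjacency, and this is a routine digital analogue of the standard topological construction.
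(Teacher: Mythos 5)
Your proposal is correct and follows essentially the same route as the paper's proof: pad the shorter homotopy with a constant tail so both have a common time length, then paste the two homotopies side by side, with the seam adjacency supplied by the rel-endpoints condition together with $\alpha(M)\sim_X\beta(0)$. The continuity checks you outline are exactly the ones the paper carries out.
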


\begin{proof}
This is basically the same as the proof of part (a) of Lemma 3.6 of  \cite{LOS19c}.  We reproduce the proof here.
Suppose we have  homotopies relative the endpoints $H\colon I_M \times I_R \to X$ and $G\colon I_N \times I_T \to X$ from $\alpha$ to $\alpha'$ and from $\beta$ to $\beta'$ respectively.  We first, if necessary, adjust one of the intervals $I_R, I_T$  so that both homotopies are of the same length.  Suppose we have $R < T$ (the case in which  $R > T$ is handled similarly, and we omit it).  Then lengthen $H$ into a based homotopy $H' \colon I_M \times I_T \to X$ defined as
$$H' (s, t) = \begin{cases} H(s, t) & 0 \leq t \leq R \\ H(s, R) & R+1 \leq t \leq T.\end{cases}$$
Allowing this to be continuous on  $I_M \times I_T$, it is clearly a homotopy relative the endpoints from $\alpha$ to $\alpha'$.  To confirm continuity,  say we have $(s, t) \sim_{I_M \times I_T} (s', t')$.  Since $t\sim_{I_T} t'$, we must have either $\{t, t'\} \subseteq [0, R]$ or $\{t, t'\} \subseteq [R, T]$.  If $\{ (s, t),  (s', t')\} \subseteq I_M \times I_R$, then continuity of $H$ gives
$H'(s, t) \sim_{X} H'(s', t')$.  If  $\{ (s, t),  (s', t')\} \subseteq I_M \times [R, T]$, then we have $H'(s, t) = H(s, R) \sim_{X} H(s', R) = H'(s', t')$. It follows that this extended $H'$ is continuous.    Now define a homotopy (with $H' = H$ in case the original $R$ and $T$ are equal)  $H'+G\colon I_{M+N+1} \times I_T \to X$ as
$$(H' +G)(s, t) = \begin{cases} H'(s, t) & 0 \leq s \leq M \\ G(s-(M+1), t) & M+1 \leq s \leq M+N+1.\end{cases}$$
Once again, if continuous on  $I_{M+N+1} \times I_T$, this is clearly a homotopy relative  the endpoints  from $\alpha\cdot\beta$ to $\alpha'\cdot\beta'$.  To check the two homotopies assemble together continuously, we observe  that, if $(s, t) \sim_{I_{M+N+1} \times I_T} (s', t')$, then either $\{s, s'\} \subseteq [0, M+1]$ or $\{s, s'\} \subseteq [M+1, M+N]$.  Then proceeding as in the first part, and using $H(M, t) = \alpha(M) = \alpha'(M)$ and $G(0, t) = \beta(0) = \beta'(0)$, so that $H(M, t) \sim_X G(0, t')$ for all $t, t' \in T$, we confirm the continuity of  $(H' +G)$.
\end{proof}

In our fundamental group, any reparametrization  of the form $\alpha\circ \rho_k$, for a based loop $\alpha$, represents the same equivalence class of loops as $\alpha$ in  $\pi_1(X; x_0)$.  In \cite{Bo99}, a more general kind of reparametrization  of loops was used to form the equivalence classes.  We define this more general reparametrization of paths or loops here.

\begin{definition}\label{def: triv extn}
Let $\alpha \colon I_M \to X$ be a path.   A  \emph{trivial extension} of $\alpha$ is any path $\alpha' \colon I_{M'} \to X$ of the following form.  For each  $i$ with $0 \leq i \leq M$, choose $t_i \in \Z$ with $t_i \geq 0$.  Then define $\alpha'$ by
$$\alpha'(s) = \begin{cases}  \alpha(0) & 0 \leq s \leq t_0\\
\alpha(1) & t_0 + 1 \leq s \leq t_0 + 1 + t_1  \\
\alpha(2) & t_0 + t_1 + 2 \leq s \leq t_0 + t_1 + 2 + t_2  \\
\ \ \vdots & \ \ \vdots \\
\alpha(M) & \sum_{i=0}^{M-1} t_i + M \leq s \leq \sum_{i=0}^{M} t_i + M. \end{cases}
$$
\end{definition}

If we choose each $t_i = 0$, then we retrieve the original path $\alpha$.  Generally, a trivial extension of $\alpha$ is a prolonged version (a re-parametrization) of $\alpha$ that repeats the value $\alpha(i)$ an extra $t_i$ times, to produce a path $\alpha' \colon I_{M'} \to X$ with the same image in $X$ as that of $\alpha$, but of  length
$$M' = \sum_{i=0}^{M} t_i + M.$$
We may also view this trivial extension $\alpha'$ as a concatenation of $M+1$ constant paths
$$\alpha' = \alpha_0 \cdot\ \cdots \cdot \alpha_M$$
where each $\alpha_i \colon I_{t_i} \to  X$ is a constant path of length $t_i$ at $\alpha(i)$.

\begin{lemma}\label{lem: triv ext alpha C}
Let $\alpha \colon I_M \to X$ be any path in $X$.  Suppose we have a trivial extension $\alpha' \colon I_{M'} \to X$ of $\alpha$ as above, with at least one of the $t_i$ positive.  There is a homotopy relative the endpoints
$$\alpha' \approx \alpha \cdot C_T \colon I_{M'} \to X,$$
where $C_T \colon I_T \to X$ is the constant path at $\alpha(M)$ of length $T = \sum_{i=0}^{M} t_i  - 1$.
\end{lemma}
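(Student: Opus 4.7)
The plan is to show $\alpha' \approx \alpha \cdot C_T$ relative the endpoints by constructing the required homotopy as a composition of elementary local moves, each of which shifts a single pause in the path one step to the right.  My building block is the following sub-lemma, which I would establish first: if $\gamma \colon I_L \to X$ is any path with $\gamma(k) = \gamma(k+1)$ for some $k$ with $0 \leq k \leq L - 2$, and $\gamma' \colon I_L \to X$ is obtained from $\gamma$ by replacing only the value $\gamma(k+1)$ with $\gamma(k+2)$, then $\gamma \approx \gamma'$ relative the endpoints.  The explicit homotopy $H \colon I_L \times I_1 \to X$ with $H(\cdot, 0) = \gamma$ and $H(\cdot, 1) = \gamma'$ differs from the constant homotopy only at $s = k+1$, and continuity is immediate because $\gamma(k)$ and $\gamma(k+2)$ are each adjacent to $\gamma(k+1) = \gamma(k)$ in $X$ by the continuity of $\gamma$; moreover the endpoints $\gamma(0)$ and $\gamma(L)$ are untouched.

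Given this shift sub-lemma, the main argument proceeds as follows.  The path $\alpha'$ has exactly $T + 1 = \sum t_i$ repetitions, distributed in blocks at each $\alpha(i)$ with $t_i > 0$, while $\alpha \cdot C_T$ has the same total number of repetitions but all packed into the final block of constant value $\alpha(M)$.  Processing the pauses from right to left---always shifting the rightmost pause that is not yet in its final position---each shift is non-trivial and produces another continuous path of length $M'$ with the same endpoints as $\alpha'$.  After finitely many shifts, the resulting path is literally $\alpha \cdot C_T$, and concatenating the one-step homotopies in the time parameter (in the standard way that makes $\approx$ rel endpoints transitive) yields a single homotopy $\alpha' \approx \alpha \cdot C_T$ rel endpoints.

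The main subtlety I anticipate is the order in which pauses are shifted: if two pauses sit consecutively, as happens whenever some $t_i \geq 2$, and one tries to shift the left pause first, the attempted move is vacuous because then $\gamma(k+2)$ already equals $\gamma(k+1)$.  Processing the pauses from right to left sidesteps this entirely; equivalently, one may induct on the quantity $\sum_i t_i (M - i)$, which measures the total ``distance'' of the pauses from the end and strictly decreases with each non-trivial shift.  Beyond this bookkeeping, no additional hypothesis on $X$ is needed: every adjacency required for continuity of a shift homotopy is inherited from an adjacency already present in $\alpha$, so the argument is essentially local and combinatorial.
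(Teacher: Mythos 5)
Your proof is correct, and its elementary building block is the same one the paper uses: a height-one homotopy $I_L\times I_1\to X$ that relocates a single repeated value to a neighbouring position, with continuity coming from the adjacency $\gamma(k+1)\sim_X\gamma(k+2)$ already present in the path. Where you genuinely diverge is in the global organization. The paper first proves a Claim that a \emph{single} inserted repeat can be walked from the terminal position back to any position $i$ (the chain $\alpha\cdot C_0=\beta_M\approx\beta_{M-1}\approx\cdots\approx\beta_i$), and then runs an outer induction on $\sum_i t_i$ that peels off one elementary extension at a time, invoking \lemref{lem: homotopy rel endpts} to concatenate the pieces at each stage. You instead treat all the pauses at once, sorting them rightward with a single induction on the monovariant $\sum_i t_i(M-i)$; this eliminates both the nested induction and any appeal to the concatenation lemma, at the cost of the bookkeeping subtlety you correctly identify (a shift applied to the left member of two consecutive pauses is vacuous, so one must process right-to-left or argue via the monovariant, under which even a formally vacuous shift still advances the induction). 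Your version is arguably leaner and more self-contained; the paper's version has the side benefit that its Claim, the statement $\alpha\cdot C_0\approx\beta_i$ for an elementary extension, is exactly what is reused later in the proof of \thmref{thm: digital pi1 = edge pi1}, so isolating it there pays off downstream.
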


\begin{proof}
Begin with the special case in which one of the $t_i = 1$ and the others are 0 (so we repeat once a single point of $\alpha$).  For each $i \in I_M$, write $\beta_i \colon I_{M+1} \to X$ for the trivial extension of this \emph{elementary} kind defined by
$$\beta_i (s) = \begin{cases} \alpha(s) & 0 \leq s \leq i\\  \alpha(s-1) & i+1 \leq s \leq M+1 .\end{cases}$$

\emph{Claim.} We claim that, for any $M\geq 0$ and each $i$ with $0 \leq i \leq M$, we have a homotopy of paths relative the endpoints $\alpha\cdot C_0 \approx \beta_i \colon I_{M+1} \to X$.

\emph{Proof of Claim.}
Notice that $C_0 \colon I_0 \to X$ is the constant path of length $0$ that maps the singleton point $\{0\}$ to $\alpha(M)$.  Thus we have an equality of paths $\alpha\cdot C_0 = \beta_M \colon I_{M+1} \to X$ for any $M \geq 0$.   Furthermore, we may define a homotopy $H \colon I_{M+1} \times I_1 \to X$ by
$$H(s, t) = \begin{cases} \beta_M(s) & t = 0\\ \beta_{M-1}(s) & t = 1,\end{cases}$$
for any $M \geq 1$.  We check that $H$ is continuous.  For this, suppose we have $(s, t) \sim (s', t')$ in  $I_{M+1} \times I_1$.  If $t = t'$, then we have $H(s, t) \sim_X H(s', t)$ from the continuity of either $\beta_M$ (if $t' = t = 0$) or  $\beta_{M-1}$ (if $t' = t = 1$).  So it remains to check that we have $H(s, 0) \sim_X H(s', 1)$ when $s \sim s'$ in $I_{M+1}$.  Because $s \sim s'$, we must have $\{ s, s'\} \subseteq [0, M-1]$ or $\{ s, s'\} \subseteq [M-1, M+1]$.  If $\{ s, s'\} \subseteq [0, M-1]$, then $H(s, 0) = \beta_M(s) = \alpha(s)$ and $H(s', 1) = \beta_{M-1}(s') = \alpha(s')$.  In this case, then, we have $H(s, 0) \sim_X H(s', 1)$ from the continuity of $\alpha$.  For the remaining choices of $\{ s, s'\} \subseteq [M-1, M+1]$, the possible values for $H$ satisfy $\{ H(s, 0), H(s', 1) \} \subseteq \{ \alpha(M-1), \alpha(M) \}$.  Continuity of $\alpha$ gives that  $\alpha(M-1) \sim_X \alpha(M)$, and it follows that any two values of $H$, when restricted to $[M-1, M+1]\times I_1$, must be adjacent.   Thus $H(s, t) \sim_X H(s', t')$ for any pair of adjacent points; $H$ is a (continuous) homotopy.  Clearly, we have $H(0, t) = \alpha(0)$ and $H(M+1, t) = \alpha(M)$ for $t \in I_1$, and so $H$ is a homotopy relative the endpoints
\begin{equation}\label{eq: elementary te}
\alpha \cdot C_0 = \beta_M \approx \beta_{M-1} \approx I_{M+1} \to X.
\end{equation}
Now assume inductively that, for any $M \geq 0$, we have a  homotopy of paths relative the endpoints $\alpha\cdot C_0 \approx \beta_{M-k}\colon I_{M+1} \to X$, for some $k$ with $0 \leq k \leq M-1$.  Induction starts with $k =0$ or $1$, by the observations we just made leading up to \eqref{eq: elementary te}.  For the inductive step, re-write  $\beta_{M-(k+1)}\colon I_{M+1} \to X$ as a concatenation $\gamma_{M-(k+1)} \cdot \gamma'$ with
$$\gamma_{M-(k+1)}(s) = \beta_{M-(k+1)}(s) \text{ for }  0 \leq s \leq M-k$$
the path of length $M-k$ that agrees with $\beta_{M-(k+1)}$ through the repeated value $\beta_{M-(k+1)}\big(M-(k+1)\big)= \beta_{M-(k+1)}(M-k)$, and
$$\gamma'(s) = \beta_{M-(k+1)}(s+M-k+1)  \text{ for }  0 \leq s \leq k$$
the path of length $k$ that completes  $\beta_{M-(k+1)}$ when concatenated with $\gamma_{M-(k+1)}$.  Then $\gamma_{M-(k+1)}$ is of the form of an elementary trivial extension (but of a path of length $M-k-1$) which we may write as  $\gamma_{M-(k+1)} = \gamma\cdot C'_0$, where $\gamma(s) = \gamma_{M-(k+1)}(s) =  \beta_{M-(k+1)}(s)$ for $0 \leq s \leq M-(k+1)$ and $C'_0$ the constant path of length $0$ at $\gamma\big(M-(k+1)\big) = \gamma_{M-(k+1)}\big(M-(k+1)\big) =  \beta_{M-(k+1)}\big(M-(k+1)\big)$.  As above, define a homotopy $H \colon I_{M-k} \times I_1 \to X$ by
$$G(s, t) = \begin{cases} \gamma_{M-(k+1)}(s) & t = 0\\ \gamma_{M-(k+2)}(s) & t = 1,\end{cases}$$
where $\gamma_{M-(k+2)}$ denotes the elementary trivial extension of $\gamma$ that repeats  the value $\gamma\big(M-(k+2)\big)$.  That is,
$$\gamma_{M-(k+2)}(s) = \begin{cases} \gamma(s) & 0 \leq s \leq M-(k+2)\\ \gamma(s-1) & M-(k+1) \leq s \leq M-k.\end{cases}$$
Exactly as we did leading up to \eqref{eq: elementary te}, we may confirm the continuity of this $G$, and check  that it is a homotopy relative the endpoints
$$\gamma_{M-(k+1)} \approx \gamma_{M-(k+2)}\colon I_{M-k} \to X.$$
From \lemref{lem: homotopy rel endpts} it follows that we have a homotopy relative the endpoints
$$\gamma_{M-(k+1)}\cdot \gamma' \approx \gamma_{M-(k+2)} \cdot \gamma' \colon I_{M-k} \to X.$$
But above, we chose $\gamma_{M-(k+1)}$ so that $\beta_{M-(k+1)} = \gamma_{M-(k+1)} \cdot \gamma'$, and it is easy to see that we have $\beta_{M-(k+2)} = \gamma_{M-(k+2)} \cdot \gamma'$.  Hence we have a homotopy relatiive the endpoints
$$\beta_{M-(k+1)}  \approx \beta_{M-(k+2)}  \colon I_{M+1} \to X,$$
and the induction step is complete.  The claim follows.
\emph{End of Proof of Claim.}

Now a typical trivial extension may be obtained by repeatedly making elementary extensions.  Suppose inductively that the assertion of the lemma  is true for all trivial extensions of $\alpha$ with $T = \sum_{i=0}^{M} t_i   \leq k$, for some $k \geq 1$.  Induction starts with $k=1$, and we have just established this in the claim.  Now say we have a trivial extension $\alpha' \colon I_{M'} \to X$  of $\alpha$ with $M' = M + \sum_{i=0}^{M} t_i $ and $\sum_{i=0}^{M} t_i = k+1$.  Suppose $n$ is the first index for which $t_n >0$.  Then $\alpha'$ is an elementary extension of the path $\alpha'' \colon I_{M'-1} \to X$ defined by
$$\alpha''(s) = \begin{cases} \alpha'(s) & 0 \leq s \leq  \sum_{i=0}^{n} t_i  - 1 \\
\alpha'(s+1) & \sum_{i=0}^{n} t_i \leq s \leq \sum_{i=0}^{M} t_i ,\end{cases}$$
with $M' -1 =k$.  Since $\alpha''$ is a trivial extension of $\alpha$ with $M' -1 = M+T-1$ where $T-1=k$, we may apply the inductive hypothesis to obtain a homotopy relative the endpoints
$$\alpha'' \approx \alpha \cdot C_{T-1}\colon I_{M'-1} \to X.$$
And, because $\alpha'$ is an elementary extension of $\alpha''$ we also  have (from the claim)  a homotopy relative the endpoints
$$\alpha' \approx \alpha'' \cdot C_{0}\colon I_{M'} \to X.$$
Now \lemref{lem: homotopy rel endpts} gives a homotopy relative the endpoints
$$\alpha''  \cdot C_{0} \approx \alpha \cdot C_{T-1}\cdot C_{0} \colon I_{M'} \to X.$$
Transitivity of homotopy relative the endpoints, with the observation that \\ $C_{T-1}\cdot C_{0} = C_T \colon I_T \to X$, now completes the induction.  The result follows.
\end{proof}

\section{Edge Groups and Clique Complexes}\label{sec: edge groups}

The \emph{edge group of a simplicial complex} is a group defined, like the digital fundamental group of a digital image or the fundamental group of a topological space, in terms of equivalence classes of edge loops (namely, loops consisting of edge paths).  The equivalence relation is given by a combinatorial notion of homotopy.  We repeat some of the definitions from \cite[\S3.3]{Mau96}.

Suppose that $K$ is a simplicial complex with $1$-skeleton consisting of vertices $V$ and edges $E$.  An \emph{edge path} is a finite sequence
$\{ v_0, v_1, \ldots, v_n\}$ of vertices in $V$ such that, for each $i$ with $0 \leq i \leq n-1$, we have $v_i = v_{i+1}$ or $\{v_i, v_{i+1} \} \in E$ an edge of $K$.  An edge path is an \emph{edge loop} if we have, in addition, $v_0 = v_n$.

\begin{definition}\label{def: elem edge htpy}
By an \emph{elementary edge-homotopy (relative  the endpoints)} we mean one of the following operations on edge paths:
\begin{itemize}
\item[(a)] If $v_i = v_{i+1}$, for some $i$ with $0 \leq i \leq n-1$, then replace an edge path $\{ v_0,  \ldots, v_i, v_{i+1}, v_{i+2}, \ldots v_n\}$ with $\{ v_0,  \ldots, v_i, v_{i+2}, \ldots v_n\}$.  Namely, delete a repeated vertex.  Or, conversely, for any $i$ with $0 \leq i \leq n$, replace  an edge path $\{ v_0,  \ldots, v_i, v_{i+1}, \ldots v_n\}$ with $\{ v_0,  \ldots, v_i, v_i, v_{i+1}, \ldots v_n\}$.  Namely, insert  a repeat of a vertex.
\item[(b)] If $\{v_{i-1}, v_i, v_{i+1}\}$ form a simplex of $K$,  for some $i$ with $1 \leq i \leq n-1$, replace an edge path $\{ v_0,  \ldots, v_{i-1}, v_i, v_{i+1}, \ldots v_n\}$ with $\{ v_0,  \ldots, v_{i-1}, v_{i+1}, \ldots v_n\}$.   Or, conversely, for any $i$ with $0 \leq i \leq n-1$, replace  an edge path $\{ v_0,  \ldots, v_i, v_{i+1}, \ldots v_n\}$ with $\{ v_0,  \ldots, v_i, v, v_{i+1}, \ldots v_n\}$ for any $v \in V$ for which $\{v_{i}, v, v_{i+1}\}$ form a simplex of $K$.
\end{itemize}
We say that two edge paths are \emph{edge-homotopic (relative their endpoints)} if  one can apply a finite sequence of elementary edge homotopies, of types (a) and (b) in any order or combination, so as to start with one of the edge paths and arrive at the other.  We refer to the sequence of elementary edge homotopies as an \emph{edge homotopy} from one edge path to the other.   If two edge paths $\alpha = \{ v_0, v_1, \ldots, v_n\}$ and $\beta = \{ w_0, w_1, \ldots, w_m\}$ with $v_0 = w_0$ and  $v_n = w_m$ are edge-homotopic (relative their endpoints), then we write $\alpha \approx_\e \beta$.
\end{definition}

If $K$ is a based simplicial complex with basepoint $v_0 \in V$, and if the two edge paths in question are edge loops, each of which starts and finishes at $v_0$, then we will refer to \emph{an edge homotopy of based loops}.  Two edge paths $\alpha = \{ v_0, v_1, \ldots, v_n\}$ and $\beta = \{ w_0, w_1, \ldots, w_m\}$ with $v_n = w_0$ may be concatenated to form  the edge path
$$\alpha \cdot \beta =  \{ v_0, v_1, \ldots, v_n,  w_1, w_2, \ldots, w_m\}.$$

\begin{remark}\label{rem: concatenation}
This concatenation differs from the way in which we concatenate suitable digital paths.  In fact, we could just as well concatenate edge paths in the way in which we do our digital paths, requiring only that $\{ v_n, w_0\}$ be an edge in $K$. However, since we want to cite results from  the literature, we use the standard way of concatenating edge paths.  Doing so causes no problems for us in the development.  
\end{remark}

Suppose that  $K$ is a based simplicial complex with basepoint $v_0 \in V$.  Edge homotopy of based loops is an equivalence relation on the set of edge loops based at $v_0$.  Denote the equivalence class of an edge loop $\alpha$ by $[\alpha]$, and the set of all equivalence classes by $\mathrm{E}(K; v_0)$. Just as for the fundamental group, defining
$$[\alpha] \cdot [\beta] = [\alpha\cdot \beta] \in  \mathrm{E}(K; v_0)$$
gives a well-defined product of equivalence classes.   Concatenation of edge loops is associative, and so this product is associative.  
Each edge path $\alpha = \{ v_0, v_1, \ldots, v_n\}$ has a \emph{reverse}, which is the edge path $\overline{\alpha} = \{ v_n, v_{n-1}, \ldots, v_0\}$.
One confirms that $\overline{\alpha} \cdot \alpha$ and $\alpha\cdot \overline{\alpha}$ are both edge-homotopic, as based loops, to a constant loop at $v_0$.

\begin{remark}
Although intuitively we may think of type (b) elementary edge homotopies as collapsing or expanding a $2$-simplex,  in fact there is no requirement that  $\{v_{i-1}, v_i, v_{i+1}\}$ be a $2$-simplex. Indeed, to reduce  a concatenation of the form  $\alpha\cdot \overline{\alpha}$ to the trivial loop $\{ v_0\}$  requires collapsing terms such as $\ldots, v_i, v_{i+1}, v_i, \ldots$, in which $\{ v_i, v_{i+1}\}$ is an edge, to  $\ldots, v_i,  v_i, \ldots$. 
\end{remark}

Then the equivalence class of the trivial loop $[\{ v_0\}]$ plays the role of a two-sided identity element, and  $[\alpha]^{-1} = [\overline{\alpha}]$ defines inverses, making $\mathrm{E}(K; v_0)$ into a group, called \emph{the edge group} of $K$ (based at $v_0$).

We have the following result.

\begin{theorem}[{\cite[Th.3.3.9]{Mau96}}]\label{thm: edge gp = pi1}
Suppose $K$ is a simplicial complex with basepoint $v_0$.  Let $|K|$ be the spatial realization of $K$, with basepoint $v_0 \in |K|$.  There is an isomorphism of groups
$$ \mathrm{E}(K; v_0) \cong \pi_1(|K|; v_0),$$
where  the right-hand side denotes  the ordinary fundamental group of $|K|$ as a topological space.  \qed
\end{theorem}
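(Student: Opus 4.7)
The plan is to exhibit an explicit isomorphism $\Phi\colon \mathrm{E}(K;v_0)\to\pi_1(|K|;v_0)$ by geometric realization of edge loops, then use simplicial approximation to establish bijectivity. For an edge loop $\alpha=\{v_0,v_1,\ldots,v_n=v_0\}$, define $\widetilde{\alpha}\colon[0,1]\to|K|$ by concatenating, on $n$ equal subintervals, the affine segments from $v_i$ to $v_{i+1}$ (interpreted as a constant path when $v_i=v_{i+1}$), and set $\Phi([\alpha])=[\widetilde{\alpha}]$. To check that $\Phi$ is well-defined on edge-homotopy classes I handle the two moves of \defref{def: elem edge htpy}: a type (a) move merely reparametrizes $\widetilde{\alpha}$, while a type (b) move, where $\{v_{i-1},v_i,v_{i+1}\}$ spans a simplex $\sigma$, admits a straight-line homotopy inside the convex set $|\sigma|\subseteq|K|$ from the two-segment path $[v_{i-1},v_i]\cup[v_i,v_{i+1}]$ to $[v_{i-1},v_{i+1}]$. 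That $\Phi$ is a homomorphism is clear, since edge-loop concatenation maps to topological concatenation up to reparametrization.

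For surjectivity, I apply the simplicial approximation theorem to an arbitrary based loop $\gamma\colon[0,1]\to|K|$: this yields a partition $0=s_0<s_1<\cdots<s_m=1$ and a map $g$ that is affine on each $[s_{j-1},s_j]$, sends each $s_j$ to a vertex $v_j'\in V$ with consecutive $v_j',v_{j+1}'$ either equal or spanning an edge of $K$, and is homotopic to $\gamma$ rel endpoints. The vertex sequence $\{v_0',\ldots,v_m'\}$ is then an edge loop whose $\Phi$-image is $[\gamma]$.

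For injectivity, suppose $\widetilde{\alpha}$ is null-homotopic in $|K|$, witnessed by a homotopy $H\colon[0,1]^2\to|K|$ that traces $\widetilde{\alpha}$ on one horizontal side, the constant loop at $v_0$ on the opposite side, and $v_0$ on the two vertical sides. Triangulating the square finely and applying simplicial approximation to $H$ (relative to the boundary, where it is already simplicial) produces a simplicial map $\widetilde{H}$ into $K$ agreeing with this boundary data. Each horizontal level of the triangulation reads off as an edge loop in $K$, and passing from one level to the next across a single $2$-simplex implements one type (b) elementary edge-homotopy (with degenerate cases giving type (a) moves). Stacking these slices yields an edge homotopy from $\alpha$ to the trivial loop $\{v_0\}$. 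The main obstacle is the combinatorial bookkeeping in this last step: one must arrange the triangulation of $I\times I$ so that successive horizontal slices differ by a single elementary move and so that the basepoint is preserved on the vertical sides. This is the technically delicate part of the classical argument, treated in detail by Maunder.
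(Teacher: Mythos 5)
The paper does not actually prove this theorem: it is quoted directly from Maunder \cite[Th.3.3.9]{Mau96} and stated with an immediate end-of-proof mark, so there is no in-paper argument to compare against. Your sketch is a correct outline of precisely the classical proof that the citation points to---piecewise-linear realization of edge loops, well-definedness of $\Phi$ via convexity of $|\sigma|$ for the type (b) moves, absolute simplicial approximation for surjectivity, and relative simplicial approximation of a null-homotopy $H\colon I\times I\to |K|$ for injectivity---and you have correctly identified, rather than glossed over, the one genuinely delicate point (arranging the triangulation of the square so that successive horizontal slices differ by single elementary edge-homotopies and the basepoint is preserved), which is exactly the bookkeeping the paper defers to \cite{Mau96}.
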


This result is often given as a means of computing  the fundamental group of a topological space or, at least, arriving at a presentation of it.  We refer to \cite{Mau96} for details of the material we have just reviewed.

Now suppose that $X$ is a digital image.  We may associate to $X$ its \emph{clique complex}, which we denote by $\cl(X)$ and which is a simplicial complex whose simplices are determined by the cliques of $X$.  Namely, the vertices of $\cl(X)$ are the vertices of $X$.  The $2$-cliques of $X$, namely pairs of adjacent points, are the $1$-simplices of $X$, and so-on.  In general, the $(n+1)$-cliques of $X$ are the $n$-simplices of $\cl(X)$.  Now observe that the set of simplices $\cl(X)$ satisfies the the requirements to be an (abstract) simplicial complex (a subset of a clique is again a clique).

We will show that the (digital) fundamental group of a digital image $X$ is isomorphic to the edge group of the clique complex $\cl(X)$.  The basic idea is to associate to each based loop $\alpha \colon I_M \to X$, in an obvious way, its corresponding edge loop
$$\e(\alpha) = \{ \alpha(0), \alpha(1), \ldots, \alpha(M)\}$$
of vertices in $\cl(X)$.  Note that (digital) continuity of $\alpha$ ensures that $\e(\alpha)$ is an edge path in $\cl(X)$.  Furthermore, it is an edge loop because we also have  $\alpha(0) = \alpha(M) = x_0$. Then we wish to define a homomorphism
$$\phi\colon \pi_1(X; x_0) \to  \mathrm{E}(\cl(X); x_0),$$
by setting $\phi( [\alpha] )= [ \e(\alpha)]$.  From the next lemma, it will follow that this $\phi$ is well-defined; we will complete the proof that $\phi$ gives an isomorphism of groups following that.

\begin{lemma}\label{lem: htpy vs edge htpy}
Let $\alpha, \beta\colon I_M \to X$ and $\gamma\colon I_N \to X$ be based loops in a  digital image $X$.
\begin{itemize}
\item[(i)] For any $k$, we have an edge homotopy of based edge loops $\e(\alpha\circ \rho_k) \approx_\e \e(\alpha)$ in $\cl(X)$.
\item[(ii)] If $\alpha \approx \beta\colon I_M \to X$ as based loops in $X$, then we have an edge homotopy of based edge loops $\e(\alpha) \approx_\e \e(\beta)$ in $\cl(X)$.
\item[(iii)]  If $\alpha$ and $\gamma$ are based-subdivision homotopic as based loops in $X$, then we have an edge homotopy of based edge loops $\e(\alpha) \approx_\e \e(\gamma)$ in $\cl(X)$.
\end{itemize}
\end{lemma}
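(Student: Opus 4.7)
The plan is to prove the three parts in order, with (iii) reducing immediately to (i) and (ii).

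For part (i), the projection $\rho_k \colon I_{kN+k-1} \to I_N$, defined by $\rho_k(i) = \lfloor i/k \rfloor$, sends each block of $k$ consecutive integers to a single value. Hence $\e(\alpha \circ \rho_k)$ is the edge loop consisting of $\alpha(0)$ repeated $k$ times, followed by $\alpha(1)$ repeated $k$ times, and so on through $\alpha(N)$ repeated $k$ times. A sequence of type (a) elementary edge homotopies (deleting a repeated vertex) reduces this to $\e(\alpha)$, preserving the first and last entries (both equal to $x_0$).

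For part (ii), suppose $H \colon I_M \times I_T \to X$ is a based homotopy of based loops from $\alpha$ to $\beta$, so in particular $H(0,t) = H(M,t) = x_0$ for every $t$. The key geometric observation is that for each $s \in I_{M-1}$ and $t \in I_{T-1}$, the four values $H(s,t), H(s+1,t), H(s,t+1), H(s+1,t+1)$ are pairwise adjacent in $X$, because any two of the four points $(s,t), (s+1,t), (s,t+1), (s+1,t+1)$ are adjacent in $I_M \times I_T$ (including the two diagonal pairs). Hence these four values form a clique in $X$, so any subset spans a simplex in $\cl(X)$. By transitivity it suffices to show $\e(H(-,t)) \approx_\e \e(H(-,t+1))$ for each $t$. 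I plan to do this by marching along $s = 0, 1, \ldots, M-2$: at step $s$, first insert $H(s+1,t+1)$ between $H(s,t+1)$ and $H(s+1,t)$ (justified by the simplex $\{H(s,t+1), H(s+1,t+1), H(s+1,t)\}$), then delete $H(s+1,t)$ (justified by the simplex $\{H(s+1,t+1), H(s+1,t), H(s+2,t)\}$). After $M-1$ such two-step moves, every $H(s,t)$ in the interior has been replaced by $H(s,t+1)$; at the right end, $H(M,t) = x_0 = H(M,t+1)$ already, so no further adjustment is needed.

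Part (iii) follows by combining (i) and (ii). If $\alpha$ and $\gamma$ are subdivision-based homotopic, then by definition there exist $k, l$ for which $\alpha \circ \rho_k$ and $\gamma \circ \rho_l$ are based loops of the same length that are based homotopic as based loops. Applying (ii) to that homotopy gives $\e(\alpha \circ \rho_k) \approx_\e \e(\gamma \circ \rho_l)$, while part (i) yields $\e(\alpha \circ \rho_k) \approx_\e \e(\alpha)$ and $\e(\gamma \circ \rho_l) \approx_\e \e(\gamma)$. Transitivity of $\approx_\e$ then gives $\e(\alpha) \approx_\e \e(\gamma)$.

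The main obstacle is the bookkeeping in part (ii): verifying that every proposed type (b) move is justified by an actual simplex in $\cl(X)$, and that the marching procedure transforms $\e(H(-,t))$ into $\e(H(-,t+1))$ exactly, without disturbing the fixed basepoint entries at $s = 0$ and $s = M$. The enabling geometric fact is that adjacency in the product $I_M \times I_T \subseteq \Z^2$ includes diagonal adjacencies, so that a $2\times 2$ block in the homotopy produces a $4$-clique rather than merely two disjoint edges; this is precisely what allows the 2-simplices used in the type (b) moves to exist in $\cl(X)$.
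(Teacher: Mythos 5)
Your proposal is correct, and parts (i) and (iii) coincide with the paper's argument. Part (ii) takes a genuinely leaner route than the paper's. The paper first manufactures an intermediate ``slow'' homotopy $G \colon I_M \times I_{MN} \to X$ out of $H$, so that consecutive loops $G(-,t)$ and $G(-,t+1)$ differ in at most one value, proves continuity of $G$, and only then converts each single-value change into an insert--delete pair of elementary edge homotopies. You skip the slowing-down entirely and march directly along each row transition of $H$ itself: at step $s$ you insert $H(s+1,t+1)$ using the clique $\{H(s,t+1), H(s+1,t+1), H(s+1,t)\}$ and delete $H(s+1,t)$ using the clique $\{H(s+1,t+1), H(s+1,t), H(s+2,t)\}$, and a check of the intermediate edge loops confirms that after step $s$ the loop reads $v_0, \ldots, v_{s+1}, u_{s+2}, \ldots, u_M$ (writing $u_s = H(s,t)$, $v_s = H(s,t+1)$), terminating correctly because $u_M = v_M = x_0$. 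Both arguments rest on exactly the same geometric fact --- that a $2\times 2$ block of the homotopy maps to a clique in $X$ because product adjacency in $I_M \times I_T$ includes the diagonals --- but your version buys a shorter proof by dispensing with the construction and continuity verification of $G$, at the modest cost of slightly more bookkeeping in tracking the partially-converted edge loop. (Note that some of the three vertices in a given move may coincide, but the paper's definition of a type (b) move only requires that they span a simplex, possibly degenerate, so this causes no difficulty.)
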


\begin{proof}
(i)  More generally, if we have any trivial extension $\alpha' \colon I_{M'} \to X$ of $\alpha$, then $\e(\alpha') \approx_\e \e(\alpha)$.  The composition  $\alpha\circ \rho_k$ is simply the special case of a trivial extension of $\alpha$ in which we repeat each value of $\alpha$ a total of $k$-times.  Refer to  \defref{def: triv extn} for our notation about trivial extensions.  Also, in the proof of \lemref{lem: triv ext alpha C}, we defined the \emph{elementary trivial extensions}
$$\beta_i (s) = \begin{cases} \alpha(s) & 0 \leq s \leq i\\  \alpha(s-1) & i+1 \leq s \leq M+1 .\end{cases}$$
These are the special cases of trivial extensions of $\alpha$ in which we repeat once a single point of $\alpha$.  It is tautological that we have
$$\e(\alpha) \approx_\e \e(\beta_i),$$
for each $i$ with $0 \leq i \leq M$, using elementary edge homotopies of type (a) from \defref{def: elem edge htpy} .   Since the composition  $\alpha\circ \rho_k$ may be achieved as a finite sequence  of elementary trivial extensions of the path $\alpha$, so too the edge loop $\e(\alpha\circ \rho_k)$ may be achieved as the corresponding finite sequence  of elementary edge homotopies of type (a) of the edge loop $\e(\alpha)$.

(ii) Suppose we have a based homotopy of based loops $H \colon I_M \times I_N \to X$ from $\alpha$ to $\beta$.  We resolve each step of this homotopy, namely the restriction of $H$ to a map $I _M \times [t, t+1] \to X$ for each $t$ with $0 \leq t \leq N-1$, into a succession of ``elementary homotopies," as follows.

For each $t$ with $0 \leq t \leq N$, define a loop $H_t\colon I _M  \to X$ by $H_t(s) = H(s, t)$ for $s \in I_M$.  Continuity of the homotopy $H$ means that, for each $t$ with $0 \leq t \leq N-1$, the paths $H_t \colon I_M \to X$ and $H_{t+1} \colon I_M \to X$ are adjacent as paths in $X$, in the sense used in \cite{LOS19a}.  Namely, for each $s \sim s'$ in $I_M$, we have $H_t(s) \sim_X H_{t+1}(s')$.  Now define,  for each $q$ with $0 \leq q \leq N-1$, a homotopy
$$G_q \colon I_M \times I_M \to X$$
by setting
$$G_q(s, t) = \begin{cases}  H_{q+1}(s) & 0 \leq s \leq t\\  H_{q}(s) & t+1 \leq s \leq M.\end{cases}$$
If $(s, t) \sim (s', t')$ in $I_M \times I_M$, then in particular we have $s \sim s'$ in $I_M$.  Now the only possible values for $G_q(s, t)$ are $H_{q+1}(s)$ or $H_{q}(s)$, and the only possible values for $G_q(s', t')$ are $H_{q+1}(s') $ or $H_{q}(s')$.  From the remark above, about adjacency of $H_q$ and $H_{q+1}$, it follows that we have both $H_{q+1}(s)$ and $H_{q}(s)$ adjacent to both $H_{q+1}(s')$ and $H_{q}(s')$.  Hence, we have $G_q(s, t) \sim G_q(s', t')$ in $X$, and so $G_q$ is continuous. Notice, then, that \emph{$G_q$ is a homotopy from $H_q$ to $H_{q+1}$} for each $q$ with $0 \leq q \leq N-1$, and that we have $G_q(s, M) = G_{q+1}(s, 0) = H_{q+1}(s)$ for each $s \in I_M$, each $q$ with $0 \leq q \leq N-2$.

Now we may assemble the $G_t$ together into a homotopy
$$G \colon I_M \times I_{MN} \to X$$
by setting
$$G(s, t) = G_q(s, r) \text{ if  } t = Mq+r \text{ for } 0 \leq q \leq N-1 \text{ and } 0 \leq r \leq M-1$$
and then $G(s, MN) = G_{N-1}(s, M) = H_N(s)$.  Note that this $G$ is continuous by the same argument that we use to show homotopy is transitive.  Specifically, here, we have $G = G_q$ when restricted to the rectangle $I_M \times [Mq, M(q+1)]$, and so $G$ is continuous when restricted to each such rectangle.  But if we have $(s, t) \sim (s', t')$ in $I_M \times I_{MN}$, then in particular we have $|t' - t| \leq 1$ and hence both $(s, t)$ and $(s', t')$ must lie in at least one such rectangle.  Then $G(s, t) = G_q(s, t) \sim_X G_q(s', t') = G(s, t)$ for some $q$, and it follows that $G$ is continuous on $I_M \times I_{MN}$.

We have constructed $G$, which is a ``slower" homotopy of based loops from the loop $\alpha$ at which the original homotopy $H$ starts, to the loop $\beta$  at which the original homotopy $H$ ends.  The difference between the two homotopies is that, whereas $H$ makes the transition in unit time from one loop to an adjacent loop that may differ in many values,  the slower homotopy $G$ makes a transition in unit time from one loop to an adjacent loop that differs in at most one value.

\emph{Claim.}  For each $t$ with $0 \leq t \leq MN-1$, define the two based loops $\eta, \eta'\colon I_M \to X$ by $\eta(s) = G(s, t)$ and $\eta'(s) = G(s, t+1)$.  Then we have an edge homotopy of based edge loops $\e(\eta) \approx_\e \e(\eta')$ in $\cl(X)$.

\emph{Proof of Claim.} The loops $\eta$ and $\eta'$ differ in at most one value, which  means that, for some $S$ with $1 \leq S \leq M-1$, we have $\eta(s) = \eta'(s)$ for $s \not= S$, and $\eta(S) \sim_X  \eta'(S)$ (these may agree also, in which case we have $\eta = \eta'$).  This follows from the way in which we have constructed the homotopy $G$.  Then an edge homotopy from $\e(\eta)$ to $\e(\eta')$ is given by the sequence of elementary edge homotopies
$$\begin{aligned} \cdots, \eta(S-1), \eta(S), \eta(S+1), \cdots &\approx_\e \cdots, \eta(S-1), \eta'(S), \eta(S),\eta(S+1), \cdots \\
&\approx_\e \cdots, \eta(S-1), \eta'(S), \eta(S+1), \cdots,
\end{aligned}$$
in which the first edge path is $\e(\eta)$ and the last is  $\e(\eta')$.  The first elementary edge homotopy inserts the vertex $\eta'(S)$ between $\eta(S-1)$ and $\eta(S)$.  This is permissible since $G(S-1, t)$, $G(S, t)$ and $G(S, t+1)$ form a $3$-clique in $X$, from continuity of $G$.  The second elementary edge homotopy deletes the vertex $\eta(S)$ from between $\eta'(S)$ and $\eta(S+1)$.   Again, this is permissible since continuity of $G$ implies that $G(S, t+1)$, $G(S, t)$ and $G(S+1, t)$ form a $3$-clique in $X$.  \emph{End of Proof of Claim.}

Since $\e(\eta) \approx_\e \e(\eta')$ in $\cl(X)$ for each $t$, transitivity of edge homotopies now gives $\e(\alpha) \approx_\e \e(\beta)$ in $\cl(X)$

(iii) Now suppose that $\alpha$ and $\gamma$ are based-subdivision homotopic as based loops in $X$.  This means that for some $k$ and $k'$, we have a based homotopy of based loops $\alpha\circ \rho_k \approx \gamma\circ \rho_{k'}$.  But then we have edge homotopies of based edge loops
$$\e(\alpha) \approx_\e e(\alpha\circ \rho_k) \approx_\e \e(\gamma\circ \rho_{k'}) \approx_\e\e(\gamma)$$
in $\cl(X)$, with the first and third edge homotopies coming from part (i), and the middle edge homotopy from part (ii).  Then part (iii) follows from transitivity of edge homotopies.
\end{proof}

\begin{theorem}\label{thm: digital pi1 = edge pi1}
Let $X$ be a digital image and $\cl(X)$ its clique complex.  The map
$$\phi\colon \pi_1(X; x_0) \to  \mathrm{E}(\cl(X); x_0),$$
defined by setting $\phi( [\alpha] )= [ \e(\alpha)]$ is an isomorphism of groups.
\end{theorem}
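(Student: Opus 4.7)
The plan is to establish the isomorphism by constructing an explicit two-sided inverse $\psi\colon \mathrm{E}(\cl(X); x_0) \to \pi_1(X; x_0)$, sending the class of an edge loop $\{v_0, v_1, \ldots, v_n\}$ (with $v_0 = v_n = x_0$) to the class of the digital based loop $\alpha\colon I_n \to X$ defined by $\alpha(i) = v_i$. This $\alpha$ is well-defined and continuous because consecutive vertices of an edge path are either equal or joined by an edge of $\cl(X)$, which is exactly the condition $\alpha(i) \sim_X \alpha(i+1)$. Granting that $\psi$ descends to edge-homotopy classes, both compositions are identities by direct inspection: $\psi \circ \phi([\alpha]) = [\alpha]$ tautologically, and $\phi \circ \psi$ returns the same vertex sequence. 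The well-definedness of $\phi$ itself is immediate from \lemref{lem: htpy vs edge htpy}(iii). For the homomorphism property of $\phi$, I note that $\e(\alpha \cdot \beta)$ lists the values of $\alpha$ followed by those of $\beta$, so $x_0$ appears at consecutive positions $M$ and $M+1$, whereas $\e(\alpha) \cdot \e(\beta)$ lists the junction basepoint only once; the two edge loops therefore differ by a single type-(a) elementary edge homotopy.

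The substantive content is showing $\psi$ respects elementary edge homotopies. For a type-(a) move, the two digital loops corresponding to the edge paths differ by an elementary trivial extension in the sense of \defref{def: triv extn}, so the task reduces to a subsidiary lemma: any trivial extension of a based loop is subdivision-based homotopic to the original. To prove this, I would combine \lemref{lem: triv ext alpha C}, which supplies a homotopy rel endpoints $\alpha' \approx \alpha \cdot C_T$ of the same length as $\alpha'$, with a subdivision argument: given $\alpha$ of length $M$ and $\alpha \cdot C_T$ of length $M + T + 1$, pick $j, l \geq 1$ with $j(M+1) = l(M+T+2)$, so that $\alpha \circ \rho_j$ and $(\alpha \cdot C_T) \circ \rho_l$ have the same length. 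Both are then trivial extensions of $\alpha$ of that common length, so applying \lemref{lem: triv ext alpha C} to each and invoking transitivity of homotopy relative endpoints produces the required based homotopy between these two subdivisions. This yields subdivision-based homotopy between $\alpha$ and $\alpha \cdot C_T$, and concatenating with the homotopy $\alpha' \approx \alpha \cdot C_T$ produces subdivision-based homotopy between $\alpha$ and $\alpha'$.

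For a type-(b) move, which collapses (or inserts) an internal vertex $v_i$ whose neighbors form a simplex $\{v_{i-1}, v_i, v_{i+1}\}$ of $\cl(X)$, I would interpose an intermediate loop $\alpha'\colon I_M \to X$ agreeing with $\alpha$ everywhere except at position $i$, where $\alpha'(i) = v_{i+1}$. Because the three vertices form a 3-clique in $X$, a direct one-step homotopy $H\colon I_M \times I_1 \to X$ with $H(\cdot, 0) = \alpha$ and $H(\cdot, 1) = \alpha'$ is continuous and preserves the basepoint, since \defref{def: elem edge htpy} restricts the collapsed vertex to an internal position $1 \leq i \leq M-1$. The modified loop $\alpha'$ is then an elementary trivial extension (with $v_{i+1}$ repeated at positions $i$ and $i+1$) of the shorter loop $\beta$ built from the edge path with $v_i$ deleted, so we reduce back to the trivial extension case just handled. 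Insertion of a vertex is the reverse of this move and is treated symmetrically.

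The main obstacle I anticipate is the trivial extension lemma just described: reconciling the length-preserving homotopy of \lemref{lem: triv ext alpha C} with the subdivision-based framework, where loops of different lengths must first be brought to a common subdivision length before they can be compared by a single based homotopy. Once this bridge is established, the rest is essentially bookkeeping, and any edge homotopy in $\cl(X)$ lifts step-by-step to a chain of subdivision-based homotopies of digital loops in $X$, establishing that $\psi$ is well-defined and hence that $\phi$ is an isomorphism.
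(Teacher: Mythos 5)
Your proposal is correct and follows essentially the same route as the paper: the paper phrases the argument as injectivity and surjectivity of $\phi$ rather than constructing an explicit inverse $\psi$, but the substance---lifting type-(a) and type-(b) elementary edge homotopies to subdivision-based homotopies of digital loops via \lemref{lem: triv ext alpha C} and a one-step homotopy across a $3$-clique---is identical. The only minor divergence is the trivial-extension step, where the paper simply invokes the group identity $[\alpha\cdot C_0]=[\alpha]\cdot[C_0]=[\alpha]$ from \cite{LOS19c} in place of your explicit common-subdivision bookkeeping; both are valid.
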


\begin{proof}
The map $\phi$ is well-defined by \lemref{lem: htpy vs edge htpy} (recall that, in our formulation of the digital fundamental group $\pi_1(X; x_0)$, based loops $\alpha$ and $\beta$ represent the same element of $\pi_1(X; x_0)$ if  they are subdivision-based homotopic).  Although we concatenate based loops in a slightly different way from that in which edge loops are concatenated (cf.~\remref{rem: concatenation}), nonetheless $\phi$ is a homomorphism.  The concatenation of two based loops $\alpha\cdot \beta$ has a repeat of the basepoint at times $M$ and $M+1$ (if $\alpha$ is of length $M$). But then we may use an elementary edge homotopy of type (a) to delete this repetition, so that we have
$$\e(\alpha\cdot \beta) \approx_\e \e(\alpha) \cdot \e(\beta),$$
where the right-hand side refers to (the standard) concatenation of edge loops in $\cl(X)$. It follows  that $\phi$ is indeed a homomorphism.  Any edge loop $\{v_0, \ldots, v_n\}$ in $\cl(X)$ may be viewed as $\e(\alpha)$, where $\alpha\colon I_n \to X$ is  the path $\alpha(i) = v_i$ for $0 \leq i \leq n$.  Continuity of $\alpha$ follows because $v_i$ and $v_{i+1}$ must be adjacent in $X$ for there to be an edge joining them in $\cl(X)$.  So $\phi$ is evidently onto.

It remains to show that $\phi$ is also injective.  For this it is sufficient to show  that if two edge loops, which---as we just observed---we may assume are of the form $\e(\alpha)$ and $\e(\beta)$ for loops $\alpha$ and $\beta$ in $X$, are homotopic \emph{via} an elementary edge homotopy, then the loops $\alpha$ and $\beta$ are subdivision-based homotopic.  So first suppose that  $\e(\beta)$ is edge homotopic to $\e(\alpha)$ by an elementary edge homotopy of type (a)---addition of a vertex $v_j$ after an occurrence of this vertex in  the edge loop (by the symmetric nature of edge homotopy, it is not necessary to consider removal of a vertex).  Then $\beta$ is what we earlier called an elementary trivial extension of $\alpha$. \lemref{lem: triv ext alpha C} now gives $[\beta] = [\alpha\cdot C_0]$ in $\pi_1(X; x_0)$, with $C_0$ denoting the constant loop at $x_0$.  From \cite{LOS19c} we have $[\alpha\cdot C_0] = [\alpha]\cdot [C_0] = [\alpha]$ in $\pi_1(X; x_0)$ ($\beta$ is subdivision-based homotopic to $\alpha$).  On the other hand, suppose that   $\e(\beta)$ is edge homotopic to $\e(\alpha)$ by an elementary edge homotopy of type (b)---addition of a vertex $v$ between two vertices $\alpha(j)$ and $\alpha(j+1)$ with $\{ \alpha(j), v, \alpha(j+1)\}$ a simplex of $\cl(X)$.      But if $\{ v_j, v, v_{j+1}\}$ is a simplex of $\cl(X)$, then we have $\alpha(j)  \sim \alpha(j+1)$ and $v$ is adjacent to both of these in $X$. Let $\beta_j$ denote the elementary trivial extension of $\alpha$ obtained by repeating the value $\alpha(j)$, as in the proof of \lemref{lem: triv ext alpha C}.  We may define a homotopy
$$H \colon I_{M+1} \times I_1 \to X,$$
assuming $\alpha$ is of length $M$, by setting
$$H(s, t) = \begin{cases} \alpha(s) & 0 \leq s \leq j\\
\alpha(j) & (s, t) = (j+1, 0)\\
v & (s, t) = (j+1, 1)\\
 \alpha(s-1) & j+2 \leq s \leq M+1.\end{cases}$$
 It is easy to confirm that $H$ is continuous, and that it is a based homotopy of based loops $\beta_j \approx \beta$. In $\pi_1(X; x_0)$, then, we have
 $$[\alpha] = [\alpha]\cdot [C_0] = [\alpha\cdot C_0] = [\beta_j] = [\beta],$$
 where the first two re-writes are basic identities in $\pi_1(X; x_0)$, the next is the first item we proved in the proof of \lemref{lem: triv ext alpha C}, and the last follows from the homotopy $H$ above.   Thus, for each type of elementary edge homotopy, we have established that $\e(\alpha) \approx \e(\beta)$ implies $[\alpha] = [\beta] \in \pi_1(X; x_0)$.  Injectivity of $\phi$ follows, and this completes the proof.
\end{proof}

\section{Direct Consequences for  the Digital Fundamental Group}\label{sec: first consequences}

Because so much is known about edge groups of simplicial complexes and the fundamental groups of topological spaces, it is now easy to compile many basic results about the digital fundamental group.  We simply translate known facts and results from the topological setting to the digital setting, wherever feasible.  We begin by considering digital circles.

Our definition of a digital circle  is effectively the same as the ``simple closed curve" definition of \cite[\S3]{Bo99}.   Actually, these curves are closed but are simple only in the  tolerance space sense.

\begin{definition}\label{def: circle}
Consider a set $C = \{x_0, x_1, \ldots, x_{N-1}\}$ of $N$ (distinct) points in $\Z^n$, with $N \geq 4$ and for any $n \geq 2$.  We say that $C$ is a \emph{circle of length $N$}  if we have adjacencies  $x_i \sim_C x_{i+1}$ for each $0 \leq i \leq N-2$, and $x_{N-1} \sim_C x_0$, and no other adjacencies amongst the elements of $C$.
\end{definition}

We may parametrize a digital circle as a loop $\alpha\colon I_N \to X$ (in various ways).

\begin{theorem}\label{thm: pi of C is Z}
$\pi_1(C; x_0) \cong \Z$ for every digital circle $C$.
\end{theorem}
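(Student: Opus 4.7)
The plan is to leverage our main \thmref{thm: digital pi1 = edge pi1} together with the classical \thmref{thm: edge gp = pi1} to reduce the statement to a familiar topological computation, namely $\pi_1(S^1) \cong \Z$. The strategy is to identify the clique complex $\cl(C)$ explicitly, show that its spatial realization $|\cl(C)|$ is homeomorphic to $S^1$, and then string together the two isomorphisms.

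First, I would analyze the cliques of $C$. By \defref{def: circle}, the only adjacencies in $C$ are the $N$ consecutive pairs $\{x_i, x_{i+1}\}$ (indices mod $N$). Thus the $2$-cliques of $C$ are precisely these $N$ pairs. The key observation is that there are no $3$-cliques: any three mutually adjacent points would require at least one pair $\{x_i, x_j\}$ with $|i - j| \geq 2 \pmod{N}$ to be adjacent, contradicting the \emph{``no other adjacencies''} clause of \defref{def: circle} (this uses $N \geq 4$, which ensures that $x_{i+2}$ is genuinely a different vertex, not equal to $x_{i-1}$ or $x_i$). Hence $\cl(C)$ is a pure $1$-dimensional simplicial complex: $N$ vertices together with $N$ edges $\{x_i, x_{i+1}\}$ arranged cyclically.

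Next, I would observe that the spatial realization $|\cl(C)|$ of such a cyclic $1$-dimensional complex is homeomorphic to the topological circle $S^1$: it consists of $N$ unit intervals glued end-to-end in a cycle. This is standard, and a homeomorphism may be written down explicitly by sending the edge from $x_i$ to $x_{i+1}$ linearly to the arc in $S^1$ between $e^{2\pi i k/N}$ and $e^{2\pi i (k+1)/N}$. Consequently, $\pi_1(|\cl(C)|; x_0) \cong \pi_1(S^1; 1) \cong \Z$ by the classical computation.

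Finally, I would chain the isomorphisms:
$$\pi_1(C; x_0) \;\cong\; \mathrm{E}(\cl(C); x_0) \;\cong\; \pi_1(|\cl(C)|; x_0) \;\cong\; \Z,$$
where the first isomorphism is \thmref{thm: digital pi1 = edge pi1} and the second is \thmref{thm: edge gp = pi1}. There is essentially no obstacle here beyond the brief verification that $\cl(C)$ has no simplices of dimension $\geq 2$; all the real work has been done in the preceding sections. The only mild subtlety worth mentioning is the hypothesis $N \geq 4$, which prevents the degenerate case $N = 3$ (where the three vertices would form a $2$-simplex and the realization would be a disk, giving the trivial group instead of $\Z$).
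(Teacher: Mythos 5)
Your proposal is correct and follows exactly the paper's own route: identify $\cl(C)$ as a cycle graph (the verification that there are no $3$-cliques, which you spell out, is left implicit in the paper), realize it as $S^1$, and chain Theorems \ref{thm: digital pi1 = edge pi1} and \ref{thm: edge gp = pi1} with the classical computation $\pi_1(S^1)\cong\Z$. No gaps; your added detail on why $N\geq 4$ matters is a sensible elaboration of what the paper takes for granted.
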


\begin{proof}
The clique complex of a digital circle is a cycle graph, with geometric realization an actual circle $S^1$.  The result follows from \thmref{thm: digital pi1 = edge pi1}, \thmref{thm: edge gp = pi1}, and the well-known, basic calculation of $\pi_1(S^1; x_0) \cong \Z$ (e.g. \cite[Th.II.5.1]{Mas91}).
\end{proof}

\begin{remark}
We have shown in  \cite{LOS19c} that a particular $4$-point digital circle $D$, which we called the diamond, has fundamental group $\pi_1(D; x_0) \cong \Z$.  This computation was done staying within digital topology, using some results we developed in \cite{LOS19a}.  This gives a computation of $\pi_1(S^1; x_0) \cong \Z$ independently of  the usual topological argument, through the identifications
$$\pi_1(S^1; x_0) \cong \pi_1(| \cl(D)| ; x_0) \cong \pi_1(D; x_0)$$
of \thmref{thm: digital pi1 = edge pi1} and  \thmref{thm: edge gp = pi1}.  Furthermore, these theorems allow us to lever the single computation  $\pi_1(D; x_0) \cong \Z$ into a computation of the fundamental group of any digital circle $C$, because we have  $| \cl(D)| = S^1 = | \cl(C)|$ (we mean the spatial realizations are homeomorphic to the circle, here).
Note  that digital circles of different lengths are not (digitally)  based-homotopy equivalent.  We suspect that any two digital circles are subdivision-based homotopy equivalent.  However, we are as yet unable to establish this because the arguments become bogged down in lengthy expositional details.  The digital fundamental group is preserved by this notion of subdivision-based homotopy equivalence.  But the isomorphism  $\pi_1(D; x_0) \cong \pi_1(C; x_0)$, for any digital circle $C$, is available to us without having to establish $D$ and $C$ as subdivision-based homotopy equivalent.   These comments indicate that, speaking generally, enlarged or reduced versions of a digital image should have the same fundamental group as the original, even though they will not be homotopy equivalent, and even though we may not be able to show them subdivision-based homotopy equivalent.  This is because we may---at the fundament group level---pass into the topological setting, enlarge or reduce there, and then pass back into the digital setting.
\end{remark}

We now deduce a general result that enables calculation of many examples.
The Seifert-van Kampen theorem describes the fundamental group of a union $\pi_1(U \cup V; x_0)$ in terms of the fundamental groups $\pi_1(U; x_0)$, $\pi_1(V; x_0)$ and $\pi_1(U \cap V; x_0)$.   We will need to place certain mild constraints on the union.

\begin{definition}\label{def: disconnected complements}
Suppose  $U$ and $V$ are digital images in some $\Z^n$.  Denote by $U' = \{ v \in V \mid v \not\in V \cap U\}$ the complement of $U$ in $U \cup V$ and by $V' = \{ u \in U \mid u \not\in U \cap V\}$ the complement of $V$ in $U \cup V$.  We say that $U$ and $V$ have \emph{disconnected complements} (in $U \cup V$) if  $U'$ and $V'$ are disconnected from each other.  That is,  $U$ and $V$ have disconnected complements when the set of pairs $\{ u, v\}$ with $u \in V'$, $v \in U'$ and $u \sim_{U \cup V} v$ is empty.
\end{definition}

 \begin{theorem}[Digital Seifert-van Kampen]\label{thm: DVK}
Let $U$ and $V$ be connected digital images in some $\Z^n$ with connected intersection $U \cap V$.  Choose $x_0 \in U \cap V$ for the basepoint of $U \cap V$, $U$, $V$, and $U \cup V$.  If $U$ and $V$ have disconnected complements, then
$$\xymatrix{ \pi_1(U\cap V; x_0) \ar[r]^-{i_1} \ar[d]_{i_2} & \pi_1(U; x_0) \ar[d]^{\psi_1}\\
\pi_1(V; x_0) \ar[r]_-{\psi_2} & \pi_1(U\cup V; x_0) }$$
is a pushout diagram of groups and homomorphisms, with $i_1$, $i_2$, $\psi_1$ and $\psi_2$ the homomorphisms of fundamental groups induced by the inclusions $U \cap V \to U$, $U \cap V \to V$, $U \to U \cup V$ and $V \to U \cup V$ respectively.

That is,  suppose we are given any homomorphisms $h_1\colon \pi_1(U; x_0) \to G$ and $h_2\colon \pi_1(V; y_0) \to G$ that satisfy $h_1\circ i_1= h_2\circ i_2 \colon  \pi_1(U\cap V; x_0) \to G$, with $G$ an arbitrary group.  Then there is a homomorphism $\phi\colon \pi_1(U \cup V; x_0) \to G$ that makes (all parts of)  the following diagram commute
\begin{displaymath}
\xymatrix{  \pi_1(U\cap V; x_0) \ar[r]^-{i_1} \ar[d]_{i_2}&  \pi_1(U; x_0) \ar[d]_{\psi_1}
\ar@/^/[ddr]^{h_1}&\\ \pi_1(V; x_0) \ar[r]^-{\psi_2} \ar@/_/[drr]_{h_2}& \pi_1(U\cup V; x_0)
\ar@{.>}[dr]_(.3){\phi}&\\ &&G }
\end{displaymath}
 and $\phi$ is the unique such homomorphism.
 \end{theorem}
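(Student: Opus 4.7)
The plan is to reduce the statement to the classical topological Seifert-van Kampen theorem by using the isomorphisms $\pi_1(X; x_0) \cong \mathrm{E}(\cl(X); x_0) \cong \pi_1(|\cl(X)|; x_0)$ provided by \thmref{thm: digital pi1 = edge pi1} and \thmref{thm: edge gp = pi1}, applied to each of the four digital images $U \cap V$, $U$, $V$ and $U \cup V$ in the square. First I would verify that the map $\phi$ of \thmref{thm: digital pi1 = edge pi1} is natural with respect to inclusions: if $A \subseteq B$ is an inclusion of digital images, then $\phi$ intertwines the induced homomorphism $\pi_1(A) \to \pi_1(B)$ with $\mathrm{E}(\cl(A)) \to \mathrm{E}(\cl(B))$. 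This is essentially tautological from the definition $\phi([\alpha]) = [\e(\alpha)]$, since an edge loop in $\cl(A)$ is visibly an edge loop in $\cl(B)$.

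The crucial combinatorial observation is that the ``disconnected complements'' hypothesis guarantees
\begin{equation*}
\cl(U \cup V) = \cl(U) \cup \cl(V) \qquad \text{and} \qquad \cl(U) \cap \cl(V) = \cl(U \cap V)
\end{equation*}
as simplicial complexes. Indeed, any simplex of $\cl(U \cup V)$ is a clique in $U \cup V$; if such a clique contained both a point $u \in U \setminus V$ and a point $v \in V \setminus U$, the required adjacency $u \sim v$ would violate the hypothesis. Hence every clique of $U \cup V$ lies entirely in $U$ or entirely in $V$, giving the first equality; the second follows because a simplex of $\cl(U) \cap \cl(V)$ is a clique whose vertices lie in both $U$ and $V$, and thus in $U \cap V$. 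Passing to spatial realizations gives $|\cl(U \cup V)| = |\cl(U)| \cup |\cl(V)|$ and $|\cl(U)| \cap |\cl(V)| = |\cl(U \cap V)|$.

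Next I would observe that connectedness of $U$, $V$ and $U \cap V$ translates into path-connectedness of the corresponding spatial realizations, since any digital path produces a piecewise-linear path joining consecutive vertices through edges. I would then apply the classical Seifert-van Kampen theorem to the decomposition $|\cl(U \cup V)| = |\cl(U)| \cup |\cl(V)|$. Since the standard topological SvK is stated for open covers, I would thicken each subcomplex to an open regular neighborhood (for example, the open star of the closed subcomplex in a fine enough barycentric subdivision) in $|\cl(U \cup V)|$; each such neighborhood deformation retracts onto the corresponding subcomplex, and the intersection of the two neighborhoods deformation retracts onto $|\cl(U \cap V)|$. Alternatively one can cite directly the CW version of Seifert-van Kampen for subcomplex decompositions. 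This produces the required pushout square of topological fundamental groups, which via the naturality established in the first step transports to the asserted pushout square of digital fundamental groups; uniqueness of the comparison homomorphism $\phi$ is automatic from the universal property.

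The main obstacle is the topological thickening step, and in particular the verification that the open regular neighborhoods and their deformation retractions can be chosen to preserve the basepoint $x_0$ and to be compatible under inclusion, so that the resulting comparison with the classical SvK pushout is genuinely the one induced by the digital inclusions. Once that naturality check is in hand, the combinatorial identity $\cl(U \cup V) = \cl(U) \cup \cl(V)$ does all the real work, and the rest of the proof is a formal diagram chase through the two isomorphisms of \thmref{thm: digital pi1 = edge pi1} and \thmref{thm: edge gp = pi1}.
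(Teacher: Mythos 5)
Your proposal is correct and follows essentially the same route as the paper: the whole argument rests on the identities $\cl(U)\cap\cl(V)=\cl(U\cap V)$ (always true) and $\cl(U)\cup\cl(V)=\cl(U\cup V)$ (where the disconnected-complements hypothesis enters), after which one transports the classical Seifert--van Kampen pushout back through \thmref{thm: digital pi1 = edge pi1} and \thmref{thm: edge gp = pi1}. The only difference is that the paper sidesteps your ``thickening'' obstacle entirely by citing the Seifert--van Kampen theorem in its form for subcomplexes of a simplicial complex (\cite[Th.11.60]{Rot95}), so no open regular neighborhoods or deformation retractions are needed.
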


\begin{proof}
In general, we have $\cl(U) \cap \cl(V) = \cl(U \cap V)$.  Observe that, with the hypothesis of disconnected complements, we also have $\cl(U) \cup \cl(V) = \cl(U \cup V)$.  Hence, we have  isomorphisms $\pi_1(U \cap V; x_0) \cong E\left( \cl(U) \cap \cl(V); x_0 \right)$ and $\pi_1(U \cup V; x_0) \cong E\left( \cl(U) \cup \cl(V); x_0 \right)$, from \thmref{thm: digital pi1 = edge pi1}.  Now we may apply the  
 ordinary Seifert-van Kampen theorem from the topological setting in the form for simplicial complexes  (see, e.g. \cite[Th.11.60]{Rot95}) 
to the inclusions of connected simplicial (sub-) complexes
$$\xymatrix{ \cl(U) \cap \cl(V) \ar[r] \ar[d] & \cl(U) \ar[d]\\
\cl(V) \ar[r] & \cl(U)\cup \cl(V) }$$
and conclude the result via \thmref{thm: digital pi1 = edge pi1} and \thmref{thm: edge gp = pi1}.
\end{proof}

 \begin{remark}
 We make no assumptions about any of the induced homomorphisms $i_1$, $i_2$, $\psi_1$ and $\psi_2$ being injective.  Depending on the circumstances, some or all of them, in various combinations, may be injective.  But none of them need be injective.
 \end{remark}

 \begin{remark}
 The theorem identifies $\pi_1(U\cup V; x_0)$ up to isomorphism, although it does so indirectly  in terms of a universal property.   For $U$ and $V$ that satisfy the hypotheses, a more concrete description of $\pi_1(U \cup V; x_0)$ may be given as follows (see Th.11.58 of \cite{Rot95} for example). We have an isomorphism
 $$\pi_1(U \cup V; x_0) \cong \frac{ \pi_1(U; x_0) \ast \pi_1(V; x_0)}{N},$$
 where $\pi_1(U; x_0) \ast \pi_1(V; x_0)$ denotes the free product and $N$ the normal subgroup generated by $\{ i_1(g) i_2(g^{-1}) \mid g \in \pi_1(U\cap V; x_0)\}$.   Or, in terms of presentations, if $\pi_1(U; x_0) = \langle G_1 \mid R_1 \rangle$ and $\pi_1(V; x_0) = \langle G_2 \mid R_2 \rangle$, where the $G_i$ and $R_i$ are sets of generators and relations, then we have a presentation
 $$\pi_1(U \cup V; x_0) =  \langle G_1 \cup G_2 \mid R_1 \cup R_2 \cup \{ i_1(g) i_2(g^{-1}) \mid g \in \pi_1(U\cap V; x_0)\} \rangle.$$
 \end{remark}

 \begin{remark}
 The conclusion of the theorem need not hold if $U$ and $V$ do not have disconnected complements.  For example, take $U, V \subseteq \Z^2$ as follows.
 $$U = \{ (1, 0), (0, 1) \}, \quad V = \{ (1, 0), (0, -1), (-1, 0) \},$$
 so that $U \cup V = D$, the diamond, and $U \cap V = \{ (1, 0) \}$.  Then we have $(-1, 0) \sim (0, 1)$ with $(-1, 0) \in U'$ and $(0, 1) \in V'$, and so $U$ and $V$ do not have disconnected complements.  Furthermore, we know from \cite{LOS19c} or \thmref{thm: pi of C is Z} above that $\pi_1\big(D; (1,0)\big) \cong \Z$, whereas here we have $U$ and $V$ both contractible with trivial fundamental group. Evidently, the conclusion of the theorem does not hold.  Specifically, here, the issue is that---concomitant with $U'$ and $V'$ not being disconnected---we have $\cl(U) \cup \cl(V)$ strictly contained in (not equal to)  $\cl(U \cup V)$.
 \end{remark}

\begin{remark}
It is possible to prove \thmref{thm: DVK} entirely within the digital setting (without relying on \thmref{thm: digital pi1 = edge pi1} and \thmref{thm: edge gp = pi1}).  Surprisingly, perhaps, we are able to prove \thmref{thm: DVK} by adapting the argument that is used in \cite{Mas77} to prove the topological Seifert-van Kampen theorem there.  That argument uses the Lebesgue covering lemma, from the theory of compact metric spaces.  In our digital setting, we find that it is possible to follow the same argument without really having to develop a substitute for this ingredient.  It turns out that dividing a rectangle $I_M \times I_N$ into unit squares achieves the same purpose as does dividing the rectangle $I \times I$ into subrectangles of diameter less than the Lebesgue number of a certain covering of $I \times I$ in the topological setting.
\end{remark}

\begin{remark}
Ayala et al.~\cite{ADFQ03} have a Seifert-van Kampen theorem for the digital fundamental groups they consider.  However, as we mentioned in the introduction, their approach is effectively to \emph{define} the fundamental group as that of an associated simplicial complex, so it \emph{a priori} will obey the Seifert-van Kampen theorem and possess any  other properties of  the topological fundamental group.  The difference between that approach and ours is that we have an intrinsic,  self-contained construction of the fundamental group in the digital setting, and we need to establish \thmref{thm: digital pi1 = edge pi1} and  \thmref{thm: edge gp = pi1} in order to make use of the properties of the topological fundamental group.
\end{remark}

There are some special cases of \thmref{thm: DVK} that are especially useful.  First, consider  the case in which the intersection has trivial fundamental group (cf.~\cite[Th.IV.3.1]{Mas91}).

\begin{corollary}[To \thmref{thm: DVK}]\label{cor: contractible U cap V}
Suppose $U$ and $V$ satisfy the hypotheses of \thmref{thm: DVK} (including disconnected complements) and, in addition, we have $\pi_1(U \cap V; x_0) = \{ \mathbf{e} \}$. Then we have
 $$\pi_1(U \cup V; x_0) \cong \pi_1(U; x_0) \ast \pi_1(V;x_0),$$
 where the right-hand side denotes the free product of groups.  More formally,
$$\xymatrix{ \{ \mathbf{e}\} \ar[r]^-{i_1} \ar[d]_{i_2} & \pi_1(U; x_0) \ar[d]^{\psi_1}\\
\pi_1(V; x_0) \ar[r]_-{\psi_2} & \pi_1(U\cup V; x_0) }$$
is a pushout diagram of groups and homomorphisms.
That is,  suppose we are given any homomorphisms $h_1\colon \pi_1(U; x_0) \to G$ and $h_2\colon \pi_1(V; x_0) \to G$ with $G$ an arbitrary group.  Then there is a homomorphism $\phi\colon \pi_1(U \cup V; x_0) \to G$ that makes the following diagram commute
 $$\xymatrix{ \pi_1(U; x_0) \ar[d]_{\psi_1} \ar[rrd]^-{h_1} \\
 \pi_1(U \cup V; x_0) \ar[rr]^{\phi} & &G\\
  \pi_1(V; x_0) \ar[u]^{\psi_2} \ar[rru]_-{h_2}}$$
 and $\phi$ is the unique such homomorphism.
\end{corollary}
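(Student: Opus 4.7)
The plan is to deduce this corollary directly from \thmref{thm: DVK} by specializing the pushout diagram to the case when the upper-left group is trivial, and then recognizing that the resulting pushout is by definition (or by a standard universal property argument) the free product.

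First, I would observe that all the hypotheses of \thmref{thm: DVK} hold, so the pushout diagram of that theorem is available. Substituting $\pi_1(U \cap V; x_0) = \{\mathbf{e}\}$ into the upper-left corner gives precisely the square displayed in the corollary, with $i_1$ and $i_2$ being the unique homomorphisms out of the trivial group. To verify the universal property for the corollary, I would start with arbitrary homomorphisms $h_1 \colon \pi_1(U; x_0) \to G$ and $h_2 \colon \pi_1(V; x_0) \to G$ (no compatibility condition needs to be assumed, in contrast to \thmref{thm: DVK}). The key point is that the compatibility condition $h_1 \circ i_1 = h_2 \circ i_2$ required by \thmref{thm: DVK} is \emph{automatic} here: both sides are homomorphisms from the trivial group $\{\mathbf{e}\}$ to $G$, and there is only one such homomorphism. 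Hence \thmref{thm: DVK} applies and delivers a unique homomorphism $\phi \colon \pi_1(U \cup V; x_0) \to G$ making the diagram of the corollary commute.

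To conclude the isomorphism $\pi_1(U \cup V; x_0) \cong \pi_1(U; x_0) \ast \pi_1(V;x_0)$, I would appeal to the defining universal property of the free product: the free product $A \ast B$ of two groups is precisely the pushout of the diagram $A \leftarrow \{\mathbf{e}\} \rightarrow B$ in the category of groups. Since both $\pi_1(U \cup V; x_0)$ (with $\psi_1, \psi_2$) and $\pi_1(U; x_0) \ast \pi_1(V; x_0)$ (with the canonical inclusions) satisfy the same universal property, they are canonically isomorphic by the standard uniqueness-of-initial-object argument for pushouts.

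There is no real obstacle here; the argument is a formal consequence of \thmref{thm: DVK} together with the definition of the free product as a coproduct of groups. The one place to be slightly careful is noting that the compatibility hypothesis in \thmref{thm: DVK} is vacuously satisfied in our setting, so the existence and uniqueness of $\phi$ in the corollary really does follow from the existence and uniqueness part of \thmref{thm: DVK} without any additional assumption on $h_1$ and $h_2$.
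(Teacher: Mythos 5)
Your proposal is correct and matches the paper's approach, which simply states the corollary is ``Direct from'' the digital Seifert--van Kampen theorem; you have merely filled in the routine details (the compatibility condition being vacuous and the pushout over the trivial group being the free product). No issues.
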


\begin{proof}
Direct from \thmref{thm: DVK}.
\end{proof}

In particular, if we have $U \cap V = \{ x_0\}$, so that $U \cup V$ is a one-point union of $U$ and $V$, and if $U$ and $V$ have disconnected complements in $U \cup V$, then we have
$\pi_1(U \cup V; x_0) \cong \pi_1(U; x_0) \ast \pi_1(V;x_0)$.

\begin{example}\label{ex: DD}
Let $D=\{(1,0), (0,1), (-1,0), (0,-1)\}$ be the diamond in $\Z^2$, with basepoint $(1, 0)$.  The ``double diamond" in $\Z^2$ , with basepoint $(0, 0)$ pictured in \figref{fig:D v D} may be viewed as a one-point union $D \vee D$ of two isomorphic copies of $D$.  With $U$ and $V$ the right-hand and the left-hand copies of $D$, respectively, we have $U \cap V = \{(0, 0)$\}, a single point.  Since $\pi_1(D; x_0) \cong \Z$, it follows from \corref{cor: contractible U cap V}  that we have $\pi_1(D \vee D; x_0) \cong \Z \ast \Z$.  Alternatively, we could just as well deduce the same conclusion by observing that $\cl(D \vee D)$ has geometric realization homeomorphic to $S^1 \vee S^1$, the one-point union of two circles, and using the well-known result  that  $\pi_1(S^1 \vee S^1; x_0) \cong \Z \ast \Z$ (e.g., \cite[Ex.IV.3.1]{Mas91}) together with  \thmref{thm: digital pi1 = edge pi1} and \thmref{thm: edge gp = pi1}. This example illustrates that a digital image may have non-abelian fundamental group.
\begin{figure}[h!]
\centering
\includegraphics[trim=140 450 80 100,clip,width=0.5\textwidth]{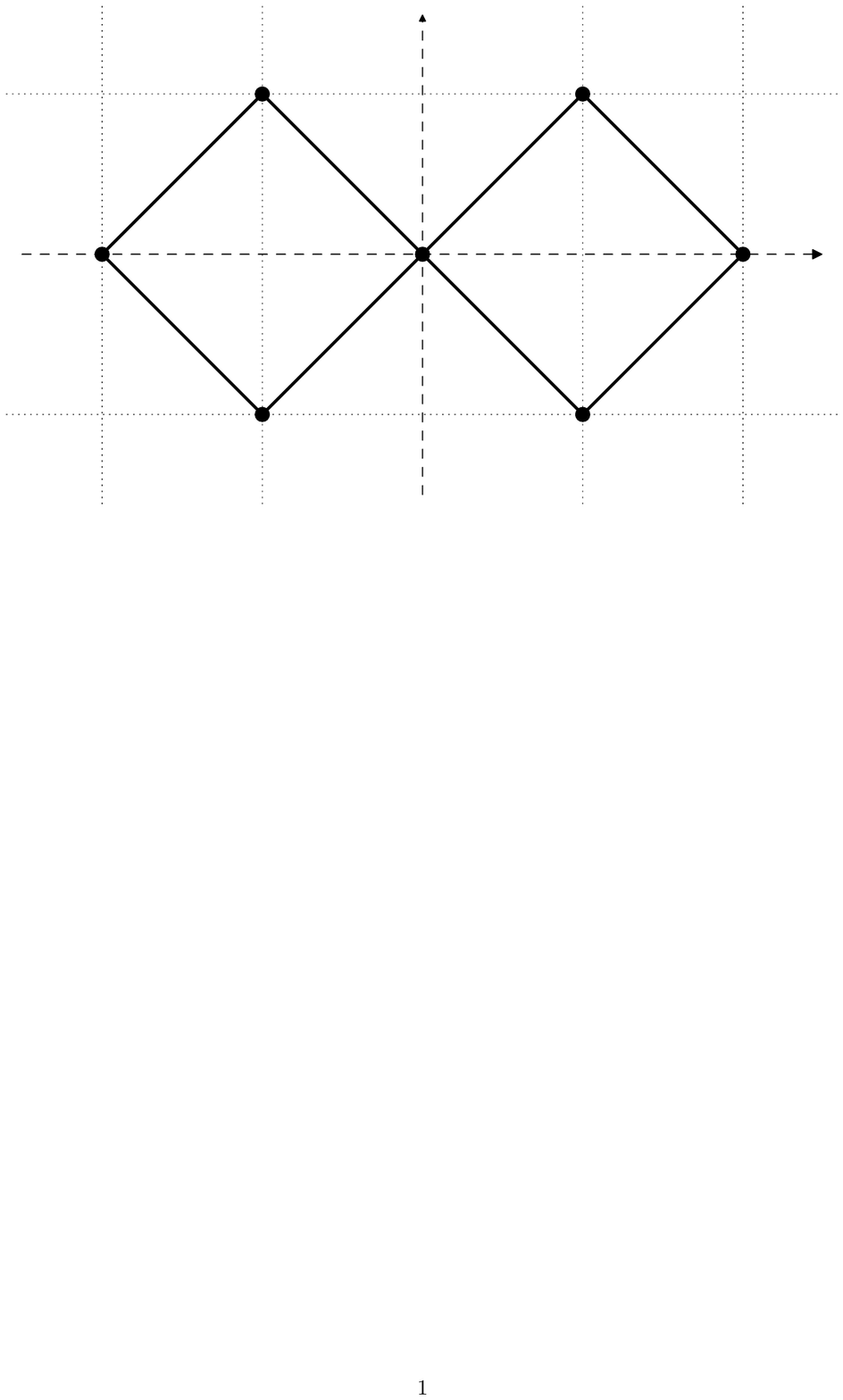}
\caption{$D \vee D$ in $\Z^2$}\label{fig:D v D}
\end{figure}
\end{example}

Another special case  of \thmref{thm: DVK} that is often useful is the case in which one of $U$ or $V$ is contractible or, at least, has trivial fundamental group (cf.~\cite[Th.IV.4.1]{Mas91}.

\begin{corollary}[To \thmref{thm: DVK}]\label{cor: contractible V}
Suppose $U$ and $V$ satisfy the hypotheses of \thmref{thm: DVK} (including disconnected complements) and, in addition, we have $\pi_1(V; x_0)=\{e\}$.   Then $\psi_1\colon \pi_1(U)\to \pi_1(U\cup V)$ is an epimorphism, and its kernel is the smallest normal subgroup of $\pi_1(U)$ containing the image $\phi_1[\pi_1(U\cap V)]$.
\end{corollary}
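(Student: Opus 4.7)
The plan is to deduce the corollary directly by specializing the pushout description of \thmref{thm: DVK} to the case $\pi_1(V;x_0) = \{\mathbf{e}\}$.  Since \thmref{thm: DVK} has already been established, and the explicit presentation of the pushout in terms of a free product modulo a normal subgroup is recorded in the second remark following it, this specialization amounts to algebraic bookkeeping.

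First I would invoke the presentation
$$\pi_1(U \cup V; x_0) \;\cong\; \bigl(\pi_1(U; x_0) * \pi_1(V; x_0)\bigr)\big/N,$$
where $N$ is the normal closure (in the free product) of the set $\{\, i_1(g)\, i_2(g^{-1}) : g \in \pi_1(U\cap V; x_0)\,\}$.  Substituting the hypothesis $\pi_1(V; x_0) = \{\mathbf{e}\}$ collapses the free product to $\pi_1(U;x_0)$ and forces $i_2(g) = \mathbf{e}$ for every $g \in \pi_1(U\cap V; x_0)$.  The generating set for $N$ therefore reduces to $\{\, i_1(g) : g \in \pi_1(U\cap V; x_0)\,\}$, and so $N$ becomes precisely the smallest normal subgroup of $\pi_1(U;x_0)$ containing $i_1[\pi_1(U\cap V;x_0)]$.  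Consequently
$$\pi_1(U \cup V; x_0) \;\cong\; \pi_1(U;x_0)/N.$$

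To finish I would verify that under this identification the induced homomorphism $\psi_1$ corresponds to the canonical quotient map $\pi_1(U;x_0) \twoheadrightarrow \pi_1(U;x_0)/N$.  This is forced by commutativity of the pushout square, since by its construction $\psi_1$ factors as the inclusion $\pi_1(U;x_0) \hookrightarrow \pi_1(U;x_0)*\pi_1(V;x_0)$ followed by the quotient onto $\pi_1(U\cup V; x_0)$, and under our identification this composite is exactly the quotient by $N$.  Surjectivity of $\psi_1$ and the identification $\ker(\psi_1) = N$ are then immediate, yielding both assertions of the corollary (with the understanding that the symbol $\phi_1$ in the statement denotes the map named $i_1$ in \thmref{thm: DVK}).

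The main obstacle is purely notational rather than mathematical: essentially all the substantive work has already been packaged into \thmref{thm: DVK} and the accompanying presentation remark, so the only care needed lies in correctly reducing the generating set of the normal subgroup $N$ once $\pi_1(V;x_0)$ is trivialized, and in matching $\psi_1$ with the resulting quotient map.
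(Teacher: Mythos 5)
Your proposal is correct and matches the paper's intent: the paper's proof is simply ``Direct from \thmref{thm: DVK},'' and your argument fills in exactly the routine specialization (collapsing the free product and reducing the generating set of $N$ when $\pi_1(V;x_0)$ is trivial) that this one-line proof leaves implicit, including the correct reading of $\phi_1$ as the map $i_1$.
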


\begin{proof}
Direct from \thmref{thm: DVK}.
\end{proof}

Our next example will  display a digital image with fundamental group isomorphic to $\Z_2$.  Our approach here is to ``reverse-engineer" a digital image $X$ so that the geometric realization of $\cl(X)$ is homeomorphic to the real projective plane $\R P^2$.    The  approach depends in part on being able to realize a  graph as a digital image.   We now describe a general procedure for doing this.  

Recall our discussion of tolerance spaces from the introduction.  A \emph{simple graph} is one that has no double edges or edges that connect a vertex to itself.  A tolerance space may be viewed as a simple graph, and vice versa, by interpreting ``adjacent vertices" in the tolerance space as ``vertices connected by an edge" in the graph.  In the following, and in the sequel, by an ``isomorphism" across the structures of digital images, on the one hand, and simple graphs/tolerance spaces, on the other, we mean an adjacency-preserving bijection of the vertices with an adjacency-preserving inverse.   

\begin{proposition}\label{prop: digital graph}
If $G$ is a finite simple graph (a finite tolerance space),  then $G$ may be isomorphically embedded as a digital image with vertices in the hypercube $[-1, 1]^{n-1} \subseteq \Z^{n-1}$, where $n = |G|$, the number of vertices. 
\end{proposition}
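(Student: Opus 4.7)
The plan is to exhibit an explicit embedding $\phi \colon G \to \{-1,0,1\}^{n-1} \subseteq \Z^{n-1}$ and to verify directly that it preserves and reflects adjacency. The key observation is that, since all coordinate values lie in $\{-1,0,1\}$, the digital-adjacency condition $|x_k - y_k| \leq 1$ for all $k$ fails for two image points precisely when some single coordinate contains $+1$ at one point and $-1$ at the other. So the task reduces to designing $n-1$ coordinates that separate (in this $\pm 1$ sense) exactly those pairs of vertices that are non-adjacent in $G$.

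Enumerate the vertices of $G$ as $v_1,\ldots,v_n$. For $i=1,\ldots,n$, define $\phi(v_i)=p_i \in \Z^{n-1}$ by the rule that, for each $k \in \{1,\ldots,n-1\}$, the $k$-th coordinate of $p_i$ is
\[
(p_i)_k \;=\; \begin{cases} +1 & \text{if } i=k,\\ -1 & \text{if } i\neq k \text{ and } v_i \text{ is not adjacent to } v_k \text{ in } G,\\ 0 & \text{otherwise.}\end{cases}
\]
Geometrically, coordinate $k$ tags $v_k$ with $+1$ and tags each non-neighbor of $v_k$ with $-1$; thus it separates $v_k$ from all of its non-neighbors and only from them.

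To verify $\phi$ is an isomorphism of tolerance spaces, suppose first that $v_i$ and $v_j$ are adjacent in $G$ (with $i\neq j$). For any coordinate $k$, if $k \in \{i,j\}$, say $k=i$, then $(p_i)_k = +1$ while $(p_j)_k = 0$ because $v_j$ is a neighbor of $v_k = v_i$; the difference is $1$. If $k \notin \{i,j\}$, then $(p_i)_k$ and $(p_j)_k$ both lie in $\{-1,0\}$. So every coordinate differs by at most $1$ and $p_i \sim p_j$ in $\Z^{n-1}$. Conversely, if $v_i$ and $v_j$ are non-adjacent in $G$ with $i<j$, then $i\le n-1$, and coordinate $i$ has $(p_i)_i=+1$ and $(p_j)_i=-1$; hence $p_i \not\sim p_j$. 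Injectivity is immediate: for $k \leq n-1$ the point $p_k$ has a $+1$ in coordinate $k$, whereas $p_n$ has no $+1$ at all, so all $p_i$ are distinct. Finally, reflexivity of $\sim$ in both $G$ and the digital image is automatic.

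The main step to worry about is dimension: one might initially fear needing one coordinate per non-edge, which could be as large as $\binom{n}{2}$. This is avoided by the simple indexing trick that every non-edge $\{v_i,v_j\}$ has a smaller-indexed endpoint $v_i$ with $i\le n-1$, so the $n-1$ ``star'' coordinates indexed by $v_1,\ldots,v_{n-1}$ already cover every non-edge and no more. No deep graph-theoretic result is required; the bound $n-1$ falls out of this enumeration.
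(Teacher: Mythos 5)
Your construction is correct, and it is essentially the paper's argument in closed form: the paper proceeds by induction, adding one vertex at a time as $(0,\ldots,0,1)$ in a new coordinate while pushing its non-neighbors down to $-1$ in that coordinate, which when unrolled yields precisely your ``one separating coordinate per vertex'' scheme (the paper's version tags only the previously embedded non-neighbors with $-1$, yours tags all of them, but the mechanism and the dimension count $n-1$ are identical). Your explicit formula and direct verification are a clean, valid alternative presentation of the same idea.
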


\begin{proof}
Work by induction on $n$.  Induction starts with $n = 1$ (or $n = 2$), where there is nothing to show.

Inductively assume that, if $|G| \leq n$, then we may embed $G$ as a digital image in  $[-1, 1]^{n-1}$.  Suppose we have a graph $G'$ with $n+1$ vertices.  Choose any vertex $x \in G'$ and write $G' = G \cup \{x\}$ with $|G| = n$.  Embed $G$ as a digital image in  $[-1, 1]^{n-1} \subseteq \Z^{n-1} \subseteq \Z^{n-1} \times \Z = \Z^{n}$. Then each vertex $y \in G$ has coordinates $y = (y_1, \ldots, y_{n-1}, 0) \in \Z^{n}$, and we have $y_i \in \{\pm1, 0\}$ for $i = 1, \ldots, n-1$.  
Denote by $\lk(v) $ the (vertices of the) \emph{link} of a vertex $v$ in a graph, namely, the set of vertices (other than $v$) connected by an edge to $v$.
Now separate the vertices of $G$ into the disjoint union $G = \lk(x) \sqcup \lk(x)^C$. For each $y \in \lk(x)^C$, move it down to the plane $y_n = -1$.  In other words, adjust the embedding of $G$ in $\Z^n$ using the isomorphism of digital images $\phi\colon G \to \overline{G}$ given by
$$\phi(y_1, \ldots, y_{n-1}, 0) = \begin{cases} (y_1, \ldots, y_{n-1}, 0) & \text{if } y \in \lk(x) \\  (y_1, \ldots, y_{n-1}, -1) & \text{if }  y \in \lk(x)^C \end{cases}$$
This is an isomorphism, since we have---for $y, y' \in \Z^{n-1} \times \{0\} \subseteq \Z^n$--- 
$$y \sim_{\Z^n} y' \iff (y_1, \ldots, y_{n-1}) \sim_{\Z^{n-1}} (y'_1, \ldots, y'_{n-1}) \iff \phi(y) \sim_{\Z^n} \phi(y').$$
So we now have $G$ embedded in $\Z^n$ as a digital image with $\lk(x) \subseteq  [-1, 1]^{n-1} \times \{0\} \subseteq \Z^n$   and $\lk(x)^C \subseteq  [-1, 1]^{n-1} \times \{-1\} \subseteq \Z^n$.  Add $x$ as the point $x = \textbf{e}_n = (0, \ldots, 0, 1)$.  This point is adjacent to every point in  $[-1, 1]^{n-1} \times \{0\} \subseteq \Z^n$, and hence to every point of $\lk(x)$ as we have embedded it.  Furthermore,  $x = \textbf{e}_n$ is not adjacent to any point of  $[-1, 1]^{n-1} \times \{-1\} \subseteq \Z^n$, and so this produces exactly the adjacencies of $x$ from $G'$.  This completes the induction. 
\end{proof}

\begin{example}\label{ex: projective plane}
As announced above, we now construct a digital image $X$ that may be viewed as a digital version of the real projective plane $\R P^2$.    Start with a suitable triangulation of $\R P^2$.  Notice that some care must be taken here.  For example, the triangulation of $\R P^2$ given in
\cite[Ex.I.6.2]{Mas91} (see Figure 1.13 on p.15 of \cite{Mas91}) is not suitable.  This is because the clique complex of that triangulation, considered as a graph, contains simplices that are not part of the triangulation (the triangulation has ``empty" simplices, and so is not a clique, or flag complex).  For example, with reference to the notation of \cite[Ex.I.6.2]{Mas91} , the $3$-clique $123$ does not correspond to a $2$-simplex of the triangulation. Indeed, the triangulation of \cite[Ex.I.6.2]{Mas91}, considered as a graph,  is actually a complete graph, and so its clique complex would be a $5$-simplex, with contractible spatial realization.  Instead, we may use  the triangulation of $\R P^2$ (represented as the disc with antipodal points of the boundary circle identified) illustrated in \figref{fig:RP2}.
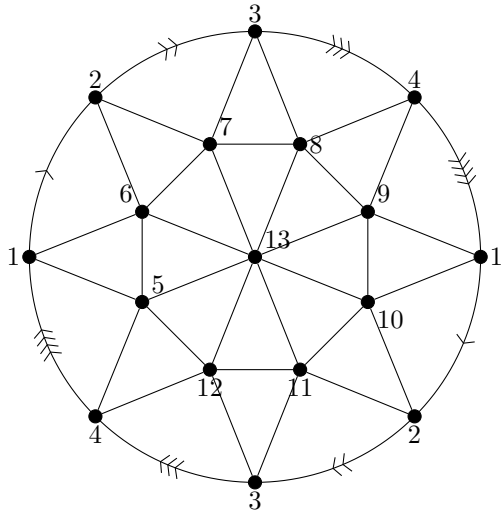
\begin{figure}[h!]
\centering
$$
\begin{tikzpicture}[
    decoration={markings},
    ]


\draw [black,domain=45:90,postaction={decorate,decoration={mark=between positions 0.45 and 0.55 step 0.045 with {\draw (.075,-.13) -- (0,0) -- (.075,.13) ;}}}] plot ({3*cos(\x)}, {3*sin(\x)});

\draw [black,domain=0:45,postaction={decorate,decoration={mark=between positions 0.42 and 0.58 step 0.045 with {\draw (.075,-.13) -- (0,0) -- (.075,.13) ;}}}] plot ({3*cos(\x)}, {3*sin(\x)});

\draw [black,domain=-45:0,postaction={decorate,decoration={mark=between positions 0.5 and 0.5 step 0.1 with {\draw (.075,-.13) -- (0,0) -- (.075,.13) ;}}}] plot ({3*cos(\x)}, {3*sin(\x)});

\draw [black,domain=-90:-45,postaction={decorate,decoration={mark=between positions 0.45 and 0.55 step 0.06 with {\draw (.075,-.13) -- (0,0) -- (.075,.13) ;}}}] plot ({3*cos(\x)}, {3*sin(\x)});

\draw [black,domain=-135:-90,postaction={decorate,decoration={mark=between positions 0.45 and 0.55 step 0.045 with {\draw (.075,-.13) -- (0,0) -- (.075,.13) ;}}}] plot ({3*cos(\x)}, {3*sin(\x)});

\draw [black,domain=-180:-135,postaction={decorate,decoration={mark=between positions 0.42 and 0.58 step 0.045 with {\draw (.075,-.13) -- (0,0) -- (.075,.13) ;}}}] plot ({3*cos(\x)}, {3*sin(\x)});

\draw [black,domain=135:180,postaction={decorate,decoration={mark=between positions 0.5 and 0.5 step 0.1 with {\draw (.075,-.13) -- (0,0) -- (.075,.13) ;}}}] plot ({3*cos(\x)}, {3*sin(\x)});

\draw [black,domain=90:135,postaction={decorate,decoration={mark=between positions 0.45 and 0.55 step 0.06 with {\draw (.075,-.13) -- (0,0) -- (.075,.13) ;}}}] plot ({3*cos(\x)}, {3*sin(\x)});

 \node[inner sep=1.75pt, circle, fill=black] (a) at (0,3) [draw] {};
 \node[inner sep=1.75pt, circle, fill=black] (g) at (-3,0) [draw] {};
 \node[inner sep=1.75pt, circle, fill=black] (b) at (3/1.414,3/1.414) [draw] {};
 \node[inner sep=1.75pt, circle, fill=black] (h) at (-3/1.414,3/1.414) [draw] {};
 \node[inner sep=1.75pt, circle, fill=black] (d) at (3/1.414,-3/1.414) [draw] {};
 \node[inner sep=1.75pt, circle, fill=black] (f) at (-3/1.414,-3/1.414) [draw] {};
 \node[inner sep=1.75pt, circle, fill=black] (e) at (0,-3) [draw] {};
 \node[inner sep=1.75pt, circle, fill=black] (c) at (3,0) [draw] {};
 \node[inner sep=1.75pt, circle, fill=black] (i) at (0,0) [draw] {};

\node[inner sep=1.75pt, circle, fill=black] (v1) at (-.6,1.5) [draw] {};
\node[inner sep=1.75pt, circle, fill=black] (v2) at (.6,1.5) [draw] {};
\node[inner sep=1.75pt, circle, fill=black] (v3) at (1.5,.6) [draw] {};
\node[inner sep=1.75pt, circle, fill=black] (v4) at (1.5,-.6) [draw] {};
\node[inner sep=1.75pt, circle, fill=black] (v5) at (.6,-1.5) [draw] {};
\node[inner sep=1.75pt, circle, fill=black] (v6) at (-.6,-1.5) [draw] {};
\node[inner sep=1.75pt, circle, fill=black] (v7) at (-1.5,-.6) [draw] {};
\node[inner sep=1.75pt, circle, fill=black] (v8) at (-1.5,.6) [draw] {};

%
%

\draw (v1) -- (h);
\draw (v1) -- (a);
\draw (v1) -- (v2);
\draw (v1) -- (v8);
\draw (v1) -- (i);

\draw (v2) -- (i);
\draw (v2) -- (b);
\draw (v2) -- (v3);
\draw (v2) -- (a);

\draw (v3) -- (b);
\draw (v3) -- (c);
\draw (v3) -- (i);
\draw (v3) -- (v4);

\draw (v4) -- (i);
\draw (v4) -- (d);
\draw (v4) -- (c);
\draw (v4) -- (v5);

\draw (v5) -- (i);
\draw (v5) -- (d);
\draw (v5) -- (e);
\draw (v5) -- (v6);

\draw (v6) -- (i);
\draw (v6) -- (e);
\draw (v6) -- (f);
\draw (v6) -- (v7);

\draw (v7) -- (i);
\draw (v7) -- (f);
\draw (v7) -- (g);
\draw (v7) -- (v8);

\draw (v8) -- (i);
\draw (v8) -- (g);
\draw (v8) -- (h);

\node[anchor = south ]  at (a) {{$3$}};
\node[anchor = south ]  at (b) {{$4$}};
\node[anchor = west ]  at (c) {{$1$}};
\node[anchor = north ]  at (d) {{$2$}};
\node[anchor = north ]  at (e) {{$3$}};
\node[anchor = north ]  at (f) {{$4$}};
\node[anchor = east ]  at (g) {{$1$}};
\node[anchor = south ]  at (h) {{$2$}};
\node[anchor = south west]  at (i) {{$13$}};

\node[anchor = south west ]  at (v1) {{$7$}};
\node[anchor =  west ]  at (v2) {{$8$}};
\node[anchor = south west ]  at (v3) {{$9$}};
\node[anchor = north west ]  at (v4) {{$10$}};
\node[anchor = north ]  at (v5) {{$11$}};
\node[anchor = north ]  at (v6) {{$12$}};
\node[anchor = south west ]  at (v7) {{$5$}};
\node[anchor = south east ]  at (v8) {{$6$}};

\end{tikzpicture}
$$
\caption{Triangulation of $\R P^2$}\label{fig:RP2}
\end{figure}
Observe that this triangulation, considered as a graph (after making the identifications indicated), contains $3$-cliques, each of which corresponds to a $2$-simplex of the triangulation, and does not contain any $4$-cliques.  Therefore, if $G$ is the (abstract) graph, or tolerance space illustrated, its clique complex will give $\cl(G) = K$, where $K$ is the (abstract) simplicial complex indicated, and thus  $|\cl(G)|$ will be homeomorphic to  $\R P^2$.  

It remains to display the abstract graph/tolerance space $G$ as a digital image, up to isomorphism.  
\propref{prop: digital graph} provides a general scheme for doing this which, if followed strictly, would result in a digital image in $\Z^{12}$.  We may adapt that scheme here and get off to a more efficient start (in terms of embedding dimension) by embedding $8$ vertices of $G$ in $\Z^3$.  Remove the vertex $13$ from $G$.  Observe that, if  the identifications indicated are made now, we would obtain a triangulated M{\"o}bius strip.  Then the vertex $13$ is a cone-point on the boundary of this M{\"o}bius strip.  Topologically, this is one way to see that $\R P^2$ may be embedded in $\R^4$. However, it seems  that, here, our particular triangulation of the M{\"o}bius strip does not embed in $\Z^4$ as a digital image.  So remove also the vertices $1$, $2$, $3$, and $4$.  What remains is an $8$-point cycle graph, which we may embed as a digital image in $\Z^3$.  In fact, we may embed the $8$-point cycle graph as a digital image in the cube $[-1, 1]^3 \subseteq \Z^3$.  The coordinates of the $8$ vertices of this cycle graph may be assigned as follows:
$$
\begin{aligned}
5 &= (1, 0, 1)  \quad 6 = (1, 1, 0)  \quad 7 = (0, 1, -1) \quad 8 = (-1, 1, 0)\\
9 &= (-1, 0, 1)  \quad 10 = (-1, -1, 0)  \quad 11 = (0, -1, -1) \quad 12 = (1, -1, 0)\\
\end{aligned}
$$
Since this is a digital image in $[-1, 1]^3$, we may now proceed with the general scheme of \propref{prop: digital graph} for embedding a graph as a digital image.  The result will be $G$ embedded as a digital image $X$ in $[-1, 1]^8 \subseteq \Z^8$.  We add the vertices $1$, $2$, $3$, $4$, and $13$, in that order, and as we add each vertex we preserve the adjacencies amongst prior vertices and add the adjacencies between them and the vertex being added.

Add vertex $1$:  Embed the graph thus far into $[-1, 1]^3 \times \{0\} \subseteq \Z^4$; move the last coordinate of those vertices not adjacent to vertex $1$ to $-1$; add the vertex $1$ as $(0, 0, 0, 1)$.  This results in
$$
\begin{aligned}
5 &= (1, 0, 1, 0)  \quad 6 = (1, 1, 0, 0)  \quad 7 = (0, 1, -1, -1) \quad 8 = (-1, 1, 0, -1)\\
9 &= (-1, 0, 1, 0)  \quad 10 = (-1, -1, 0, 0)  \quad 11 = (0, -1, -1, -1) \quad 12 = (1, -1, 0, -1)\\
1 &= (0, 0, 0, 1).
\end{aligned}
$$
The next three steps repeat this process, following the scheme of \propref{prop: digital graph}.  These steps result in a digital image in $\Z^7$ with points
$$
\begin{aligned}
5 &= (1, 0, 1, 0, -1, -1, 0)  \quad 6 = (1, 1, 0, 0, 0, -1, -1)  \quad 7 = (0, 1, -1, -1, 0, 0, -1)\\
8 &= (-1, 1, 0, -1, -1, 0, 0) \quad 9 = (-1, 0, 1, 0, -1, -1, 0)  \quad 10 = (-1, -1, 0, 0, 0, -1, -1)\\
11 &= (0, -1, -1, -1, 0, 0, -1) \quad 12 = (1, -1, 0, -1, -1, 0, 0) \quad 1 = (0, 0, 0, 1, 0, -1, 0) \\
2 &= (0, 0, 0, 0, 1, 0, -1) \quad 3 = (0, 0, 0, 0, 0, 1, 0) \quad 4 = (0, 0, 0, 0, 0, 0, 1).
\end{aligned}
$$
Finally, we add the vertex $13$, using the same scheme.  This is the point that corresponds to the cone-point if we visualize projective space as the  M{\"o}bius strip with a cone attached to its boundary.  The result is the digital image $X \subseteq \Z^8$ consisting of the $13$ points
$$
\begin{aligned}
5 &= (1, 0, 1, 0, -1, -1, 0, 0)  \quad 6 = (1, 1, 0, 0, 0, -1, -1, 0)  \quad 7 = (0, 1, -1, -1, 0, 0, -1, 0)\\
8 &= (-1, 1, 0, -1, -1, 0, 0, 0) \quad 9 = (-1, 0, 1, 0, -1, -1, 0, 0) \quad 10 = (-1, -1, 0, 0, 0, -1, -1, 0)\\
11&= (0, -1, -1, -1, 0, 0, -1, 0) \quad 12 = (1, -1, 0, -1, -1, 0, 0, 0) \quad 1 = (0, 0, 0, 1, 0, -1, 0, -1) \\
2 &= (0, 0, 0, 0, 1, 0, -1, -1) \quad 3 = (0, 0, 0, 0, 0, 1, 0, -1) \quad 4 = (0, 0, 0, 0, 0, 0, 1, -1)\\
13& =  (0, 0, 0, 0, 0, 0, 0, 1).
\end{aligned}
$$
As a digital image, recall, it is not necessary to specify adjacencies: these are determined by position, or coordinates, in $\Z^8$.

For this digital image $X$, by construction, we have $\cl(X)$ isomorphic to the complex represented by $G$, as a simplicial complex, and thus the spatial realization $|\cl(X)|$ is homeomorphic to $\R P^2$. As is well-known, we have $\pi_1(\R P^2; x_0) \cong \Z_2$ (see \cite[Ex.V.5.2]{Mas91}, for example). From
\thmref{thm: digital pi1 = edge pi1} and \thmref{thm: edge gp = pi1}, it follows that we have
$$\pi_1(X; x_0) \cong \Z_2.$$
Notice  that it would also be possible to calculate $\pi_1(X; x_0) \cong \Z_2 $ using \corref{cor: contractible V}, mimicking the steps in the argument used for \cite[Ex.V.5.2]{Mas91}.
This example illustrates that a digital image may have torsion in its fundamental group.
\end{example}

Finally, for this section, we use the approach of \exref{ex: projective plane} to show the following general realization result.

\begin{theorem}\label{thm: fg realization}
Every finitely presented group occurs as the (digital) fundamental group of some digital image.
\end{theorem}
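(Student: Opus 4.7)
The plan is to reduce the problem to the topological realization of finitely presented groups, and then transport back into the digital setting using the machinery already developed, especially \thmref{thm: digital pi1 = edge pi1}, \thmref{thm: edge gp = pi1}, and \propref{prop: digital graph}. Given a finitely presented group $G = \langle g_1,\dots,g_n \mid r_1,\dots,r_m\rangle$, I would first invoke the standard topological realization: there is a finite $2$-dimensional CW complex $P$, the presentation complex of $G$, with $\pi_1(|P|;v_0)\cong G$. Triangulating the $2$-cells (and subdividing edges as needed) produces a finite simplicial complex $K_0$ whose geometric realization is homeomorphic to $|P|$, and which therefore satisfies $\pi_1(|K_0|;v_0)\cong G$.

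The next step is to force $K_0$ to be a \emph{flag} (clique) complex, so that it can be recovered from its $1$-skeleton via the $\cl(-)$ construction. For this, replace $K_0$ by its barycentric subdivision $K := K_0'$. The vertices of $K$ are the simplices of $K_0$, and a collection of vertices of $K$ spans a simplex of $K$ precisely when the corresponding simplices of $K_0$ form a chain under inclusion. Since any two simplices that lie in a common chain are comparable, and comparability is transitive on sets of pairwise comparable simplices, a set of vertices of $K$ that is pairwise adjacent in its $1$-skeleton automatically forms a chain, hence a simplex. Thus $K$ is a flag complex with $\cl(K^{(1)})=K$, and we still have $\pi_1(|K|;v_0)\cong \pi_1(|K_0|;v_0)\cong G$ since barycentric subdivision does not change the homeomorphism type of the realization.

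Now interpret the $1$-skeleton $K^{(1)}$ as a finite simple graph, equivalently a finite tolerance space. By \propref{prop: digital graph}, this tolerance space can be embedded isomorphically as a digital image $X\subseteq [-1,1]^{N-1}\subseteq \Z^{N-1}$, where $N=|K^{(1)}|$, in such a way that the adjacency relation on $X$ inherited from $\Z^{N-1}$ coincides with edge-adjacency in $K^{(1)}$. Choose any basepoint $x_0\in X$ corresponding to $v_0\in K$. Because adjacency is preserved on the nose, the cliques of $X$ are exactly the cliques of $K^{(1)}$, and since $K$ is a flag complex these are exactly the simplices of $K$. Therefore $\cl(X)=K$ as simplicial complexes.

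Combining the identifications, \thmref{thm: digital pi1 = edge pi1} gives $\pi_1(X;x_0)\cong \mathrm{E}(\cl(X);x_0) = \mathrm{E}(K;v_0)$, while \thmref{thm: edge gp = pi1} gives $\mathrm{E}(K;v_0)\cong \pi_1(|K|;v_0)\cong G$. Hence $\pi_1(X;x_0)\cong G$, as required. The principal subtlety, and the only place genuine care is needed, is the flag-complex step: without it, a triangulation of $|P|$ could easily contain ``empty'' simplices (cf.\ the cautionary discussion in \exref{ex: projective plane}), in which case passing through $\cl(-)$ would fill them in and alter the fundamental group. Barycentric subdivision removes this obstacle cleanly, so the remaining ingredients are already in place.
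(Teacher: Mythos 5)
Your proof is correct, and the final stage (embed the $1$-skeleton of a flag complex via \propref{prop: digital graph}, identify the clique complex, and apply \thmref{thm: digital pi1 = edge pi1} together with \thmref{thm: edge gp = pi1}) is exactly the paper's. Where you genuinely diverge is in how you produce a flag complex realizing $G$: the paper constructs one explicitly by hand, taking a one-point union of length-$4$ cycle graphs for the generators and attaching, for each relator, a triangulated disk built as an annulus of two concentric cycle graphs coned off at an interior vertex, and then checks directly that no empty $2$-simplices or $4$-cliques are created; you instead take an arbitrary triangulation of the presentation complex and pass to its barycentric subdivision, using the standard fact that a barycentric subdivision is always a flag complex (pairwise comparable simplices in the face poset form a chain). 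Your route is shorter and more general --- it in fact shows that the homotopy type of \emph{any} finite simplicial complex is realized by some digital image, which bears on the question raised in the remark following \exref{ex: fg example} --- at the cost of invoking triangulability of the presentation complex and producing much larger complexes (hence much larger embedding dimensions in \propref{prop: digital graph}). The paper's explicit construction keeps the complex small and its combinatorics completely transparent, which is what makes worked examples like \exref{ex: fg example} feasible. Both arguments are sound; just be sure to justify (or cite) the triangulation of the $2$-cells, which is classical for presentation complexes since the attaching maps are edge loops.
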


\begin{proof}
Suppose $G$ is a finitely presented group with finite presentation
$$G = \langle g_1,  \ldots, g_n \mid R_1, \ldots, R_m \rangle.$$
Here, each $R_j$ is a word in the $g_i$ and their inverses $g_i^{-1}$.  We may suppose these words are in reduced form (no occurrences of a generator juxtaposed with its own inverse).  First we build in the usual way, but taking care to avoid empty simplices, a two-dimensional simplicial complex with this $G$ as edge group.   For the one-skeleton, take an $n$-fold one-point union of length-$4$ cycle graphs with vertices
$$V = \{ v_0 \} \cup \bigcup_{i=1}^n \{ v_{i, 1}, v_{i, 2}, v_{i, 3} \}$$
and edges
$$E = \bigcup_{i=1}^n \left\{   \{ v_0, v_{i, 1}\}, \{ v_{i, 1}, v_{i, 2} \}, \{ v_{i, 2}, v_{i, 3} \}, \{ v_{i, 3}, v_0 \} \right\}.$$
The case in which $n = 2$ is illustrated in \figref{fig:n=2} below.
The edge group of this graph is the free group on $n$ generators, which we may identify with the free group $\langle g_1,  \ldots, g_n \rangle$ in an obvious way.  Namely, each generator $g_i$ corresponds to the edge loop  $\{ v_0, v_{i, 1}, v_{i, 2}, v_{i, 3}, v_0 \}$  of length $4$.  The inverse of a generator corresponds to the reverse path:  $g_i^{-1}$ corresponds to the edge loop  $\{ v_0, v_{i, 3}, v_{i, 2}, v_{i, 1}, v_0 \}$.  Because each of the generating cycle graphs is of length four, there are no $3$-cliques in this graph, hence no empty $2$-simplices.

Next, for each relator $R_j$, we wish to attach a (triangulated) disk so as to  introduce this relation into the edge group.   Here, again, we just have to be careful not to introduce any empty $3$-simplices.
We may achieve this as follows.  Consider a single relator $R$.  Suppose $R$ is a word
$$R = g_{j_1}^{\epsilon_1} \cdots g_{j_k}^{\epsilon_k}$$
of length $k$ in the letters $\{ g_i, g_i^{-1} \}$, with each $\epsilon_r$ either $1$ or $-1$.  Define a cycle graph $C$ of length $4k$ whose vertices we list in order as
$$V_C = \{ w_1, w_{1, 1}, w_{1, 2}, w_{1, 3},  w_2, w_{2, 1}, w_{2, 2}, w_{2, 3}, w_3, \ldots,  w_k, w_{k, 1},  w_{k, 2}, w_{k, 3} \},$$
with adjacent vertices of this list joined by an edge of $C$, as well as the last vertex $w_{k, 3}$ and the first vertex $w_1$ joined by an edge.  Take a copy of this cycle graph $C'$ with vertices
$$V_{C'} = \{ w'_1, w'_{1, 1}, w'_{1, 2}, w'_{1, 3},  w'_2, w'_{2, 1}, w'_{2, 2}, w'_{2, 3}, w'_3, \ldots,  w'_k, w'_{k, 1}, w'_{k, 2}, w'_{k, 3} \}.$$
Now join the $i$th listed  vertex of $C$ to the $i$th and $(i+1)$st listed vertices of $C'$ (treating the $(4k+1)$st as the first).  This creates a ``triangulated annulus," with $C$ as outer boundary and $C'$ as inner boundary.  Finally, add another vertex $w$, and join this vertex to every vertex of $C'$.  A case in which $k=3$ is illustrated in \exref{ex: fg example} below (see \figref{fig:disk}).

So far, we have built a triangulated disk that has no $4$-cliques. Now attach this disk to the one-point union of length-$4$ cycle graphs, according as the letters of the relator $R$. Namely, identify for each $i$ the edge loops (vertex-for-vertex and edge-for-edge)
$$w_i, w_{i, 1}, w_{i, 2}, w_{i, 3},  w_{i+1} \text{ with } \begin{cases} v_0, v_{i, 1}, v_{i, 2}, v_{i, 3},  v_0 & \text{ if }  \epsilon_i = 1\\
v_0, v_{i, 3}, v_{i, 2}, v_{i, 1},  v_0 & \text{ if }  \epsilon_i = -1.\end{cases}$$
Now it is standard that attaching this disk in this way introduces the relation $R$ into the edge group (and no other relations).  The main point here, though, is that we have introduced the desired relation by building a $2$-dimensional simplicial complex that has no empty $2$-simplices, and no $4$-cliques (hence no empty  $3$- or higher simplices).  Considering the $1$-skeleton of the complex after attaching the disk as a graph,  its clique complex is the $2$-dimensional complex we have constructed.

It is clear that we may apply this last step to each of  the relators $R_j$.  Doing so constructs a $2$-dimensional simplicial complex $K$, that is the original one-point union of $n$ length-$4$ cycle graphs, with $m$ triangulated disks attached as in the step above.  The edge group of $K$, by construction, is $G$.  As a (finite, simple) graph, we may embed the one-skeleton of $K$ into some $\Z^n$ (possibly a high-dimensional such) as a digital image, following the scheme of \propref{prop: digital graph}.  Furthermore, from the way in which we have constructed and attached the triangulated disks, the clique complex of this digital image, considered as the graph we started from, is exactly $K$.     Then the (digital) fundamental group of this digital image is $G$, as follows from \thmref{thm: digital pi1 = edge pi1}.
\end{proof}

\begin{example}\label{ex: fg example}
We illustrate the above result with an example.  Take
$$G = \langle g_1, g_2 \mid g_1 g_2 g_1^{-1} \rangle.$$
Following the recipe of the proof of \thmref{thm: fg realization}, we start with a graph that is a one-point union of two cycle graphs of length $4$:
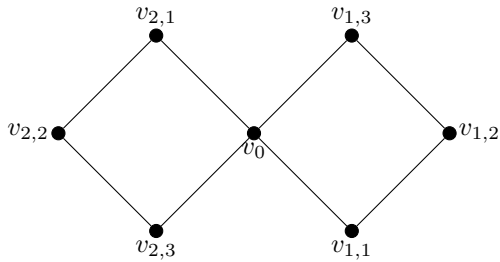
\begin{figure}[h!]
\centering

$$
\begin{tikzpicture}[scale=1.3,
    decoration={markings},
    ]

\node[inner sep=1.75pt, circle, fill=black] (v0) at (0,0) [draw] {};
\node[inner sep=1.75pt, circle, fill=black] (v21) at (-1,1) [draw] {};
\node[inner sep=1.75pt, circle, fill=black] (v22) at (-2,0) [draw] {};
\node[inner sep=1.75pt, circle, fill=black] (v23) at (-1,-1) [draw] {};
\node[inner sep=1.75pt, circle, fill=black] (v13) at (1,1) [draw] {};
\node[inner sep=1.75pt, circle, fill=black] (v12) at (2,0) [draw] {};
\node[inner sep=1.75pt, circle, fill=black] (v11) at (1,-1) [draw] {};

\draw (v0) -- (v21);
\draw (v0) -- (v23);
\draw (v0) -- (v13);
\draw (v0) -- (v11);

\draw (v22) -- (v21);
\draw (v22) -- (v23);

\draw (v12) -- (v13);
\draw (v12) -- (v11);

\node[anchor =  north ]  at (v0) {{$v_{0}$}};
\node[anchor =  south ]  at (v21) {{$v_{2,1}$}};
\node[anchor =  east ]  at (v22) {{$v_{2,2}$}};
\node[anchor =  north ]  at (v23) {{$v_{2,3}$}};
\node[anchor =  south ]  at (v13) {{$v_{1,3}$}};
\node[anchor =  west ]  at (v12) {{$v_{1,2}$}};
\node[anchor =  north ]  at (v11) {{$v_{1,1}$}};

\end{tikzpicture}
$$
\caption{Two-fold one-point union of cycle graphs of length $4$.  }\label{fig:n=2}
\end{figure}

Next, we construct a triangulated disk whose boundary corresponds to the relation we wish to introduce.  Once again following the recipe of the proof of \thmref{thm: fg realization}, this will consist of:  a cycle graph of length $12$; an intermediate cycle graph of  the same length; an evident triangulation of the ``annulus" with these cycle graphs as boundary; a cone-point added to ``cone-off" the inner cycle graph.  In \figref{fig:disk}, we have illustrated the result, and also indicated the identifications we make along the boundary, with vertices and edges identified with their counterparts in the one-point union illustrated above.
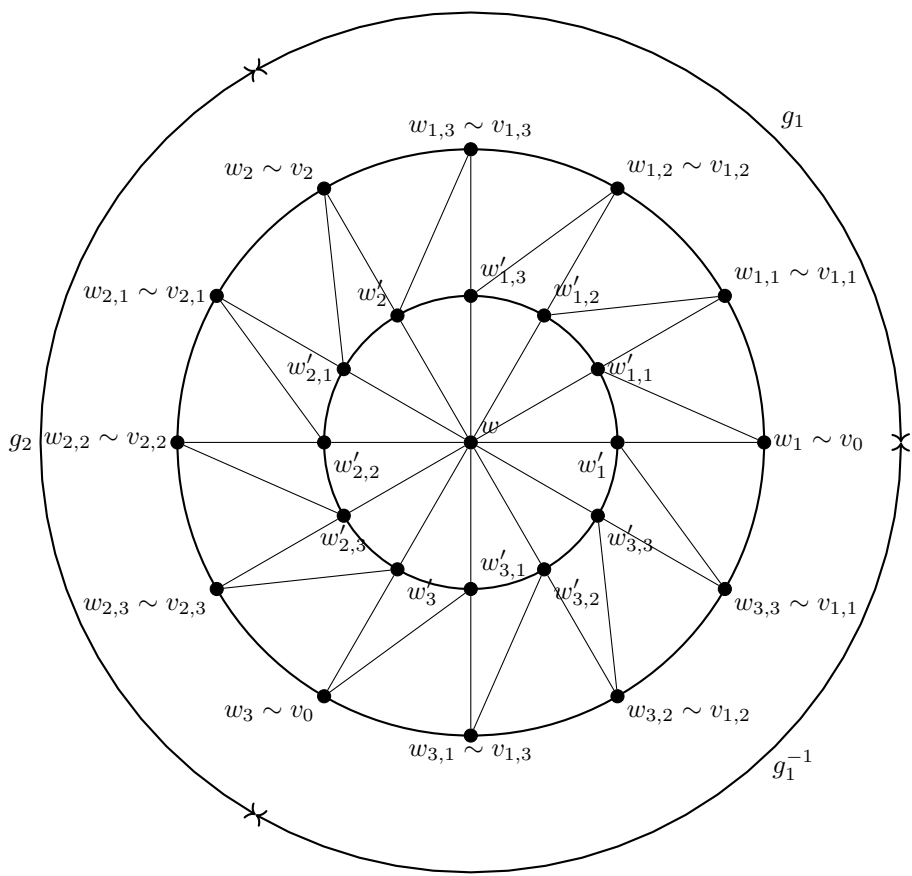
\begin{figure}[h!]
\centering

$$
\begin{tikzpicture}[scale=1.3,
    decoration={markings},
    ]
\draw[black, thick] (0,0) circle [radius=3];
\draw[black, thick] (0,0) circle [radius=1.5];
 \draw [thick,domain=0:120, <->] plot ({4.4*cos(\x)}, {4.4*sin(\x)});
 \draw [thick,domain=120:240, <->] plot ({4.4*cos(\x)}, {4.4*sin(\x)});
  \draw [thick,domain=240:360, <->] plot ({4.4*cos(\x)}, {4.4*sin(\x)});

\node[]  at (3.3,3.3) {{$g_1$}};
\node[]  at (-4.6,0) {{$g_2$}};
\node[]  at (3.3,-3.3) {{$g^{-1}_1$}};

 \node[inner sep=1.75pt, circle, fill=black] (w) at (0,0) [draw] {};

 \node[inner sep=1.75pt, circle, fill=black] (w1) at (3,0) [draw] {};
 \node[inner sep=1.75pt, circle, fill=black] (w2) at (2.598,1.5) [draw] {};
 \node[inner sep=1.75pt, circle, fill=black] (w3) at (1.5,2.598) [draw] {};
 \node[inner sep=1.75pt, circle, fill=black] (w4) at (0,3) [draw] {};
 \node[inner sep=1.75pt, circle, fill=black] (w5) at (-1.5,2.598) [draw] {};
 \node[inner sep=1.75pt, circle, fill=black] (w6) at (-2.598,1.5) [draw] {};
 \node[inner sep=1.75pt, circle, fill=black] (w7) at (-3,0) [draw] {};
 \node[inner sep=1.75pt, circle, fill=black] (w8) at (-2.598,-1.5) [draw] {};
 \node[inner sep=1.75pt, circle, fill=black] (w9) at (-1.5,-2.598) [draw] {};
  \node[inner sep=1.75pt, circle, fill=black] (w10) at (0,-3) [draw] {};
   \node[inner sep=1.75pt, circle, fill=black] (w11) at (1.5,-2.598) [draw] {};
    \node[inner sep=1.75pt, circle, fill=black] (w12) at (2.598,-1.5) [draw] {};

 \node[inner sep=1.75pt, circle, fill=black] (w1') at (3/2,0) [draw] {};
 \node[inner sep=1.75pt, circle, fill=black] (w2') at (2.598/2,1.5/2) [draw] {};
 \node[inner sep=1.75pt, circle, fill=black] (w3') at (1.5/2,2.598/2) [draw] {};
 \node[inner sep=1.75pt, circle, fill=black] (w4') at (0,3/2) [draw] {};
 \node[inner sep=1.75pt, circle, fill=black] (w5') at (-1.5/2,2.598/2) [draw] {};
 \node[inner sep=1.75pt, circle, fill=black] (w6') at (-2.598/2,1.5/2) [draw] {};
 \node[inner sep=1.75pt, circle, fill=black] (w7') at (-3/2,0) [draw] {};
 \node[inner sep=1.75pt, circle, fill=black] (w8') at (-2.598/2,-1.5/2) [draw] {};
 \node[inner sep=1.75pt, circle, fill=black] (w9') at (-1.5/2,-2.598/2) [draw] {};
  \node[inner sep=1.75pt, circle, fill=black] (w10') at (0,-3/2) [draw] {};
   \node[inner sep=1.75pt, circle, fill=black] (w11') at (1.5/2,-2.598/2) [draw] {};
    \node[inner sep=1.75pt, circle, fill=black] (w12') at (2.598/2,-1.5/2) [draw] {};

\draw (w) -- (w1');
\draw (w) -- (w2');
\draw (w) -- (w3');
\draw (w) -- (w4');
\draw (w) -- (w5');
\draw (w) -- (w6');
\draw (w) -- (w7');
\draw (w) -- (w8');
\draw (w) -- (w9');
\draw (w) -- (w10');
\draw (w) -- (w11');
\draw (w) -- (w12');

\draw (w1') -- (w12);
\draw (w1') -- (w1);

\draw (w2') -- (w1);
\draw (w2') -- (w2);

\draw (w3') -- (w2);
\draw (w3') -- (w3);

\draw (w4') -- (w3);
\draw (w4') -- (w4);

\draw (w5') -- (w4);
\draw (w5') -- (w5);

\draw (w6') -- (w5);
\draw (w6') -- (w6);

\draw (w7') -- (w6);
\draw (w7') -- (w7);

\draw (w8') -- (w7);
\draw (w8') -- (w8);

\draw (w9') -- (w8);
\draw (w9') -- (w9);

\draw (w10') -- (w9);
\draw (w10') -- (w10);

\draw (w11') -- (w10);
\draw (w11') -- (w11);

\draw (w12') -- (w11);
\draw (w12') -- (w12);

\node[anchor = south west ]  at (w) {{$w$}};

\node[anchor =  north east ]  at (w1') {{$w_1'$}};
\node[anchor =  west ]  at (w2') {{$w_{1,1}'$}};
\node[anchor = south west ]  at (w3') {{$w_{1,2}'$}};
\node[anchor = south west ]  at (w4') {{$w_{1,3}'$}};
\node[anchor = south east ]  at (w5') {{$w_2'$}};
\node[anchor = east ]  at (w6') {{$w_{2,1}'$}};
\node[anchor = north west ]  at (w7') {{$w_{2,2}'$}};
\node[anchor = north ]  at (w8') {{$w_{2,3}'$}};
\node[anchor = north west ]  at (w9') {{$w_3'$}};
\node[anchor = south west ]  at (w10') {{$w_{3,1}'$}};
\node[anchor = north west ]  at (w11') {{$w_{3,2}'$}};
\node[anchor = north west ]  at (w12') {{$w_{3,3}'$}};

\node[anchor =  west ]  at (w1) {{$w_{1}\sim v_{0}$}};
\node[anchor = south west ]  at (w2) {{$w_{1,1}\sim v_{1,1}$}};
\node[anchor = south west ]  at (w3) {{$w_{1,2}\sim v_{1,2}$}};
\node[anchor =  south  ]  at (w4) {{$w_{1,3}\sim v_{1,3}$}};
\node[anchor = south east ]  at (w5) {{$w_{2}\sim v_{2}$}};
\node[anchor =  east ]  at (w6) {{$w_{2,1}\sim v_{2,1}$}};
\node[anchor =  east ]  at (w7) {{$w_{2,2}\sim v_{2,2}$}};
\node[anchor = north east ]  at (w8) {{$w_{2,3}\sim v_{2,3}$}};
\node[anchor = north east ]  at (w9) {{$w_{3}\sim v_{0}$}};
\node[anchor = north  ]  at (w10) {{$w_{3,1}\sim v_{1,3}$}};
\node[anchor = north west ]  at (w11) {{$w_{3,2}\sim v_{1,2}$}};
\node[anchor = north west ]  at (w12) {{$w_{3,3}\sim v_{1,1}$}};

\end{tikzpicture}
$$
\caption{Triangulated disk, plus attachements.  }\label{fig:disk}
\end{figure}
Identifying the boundary of this triangulated disk, in the way indicated, to the one-point union of length-$4$ cycle graphs illustrated in  \figref{fig:n=2} results in a $2$-dimensional simplicial complex whose edge group is $G$.    This simplicial complex has $7+12+1=20$ vertices.  Following our general scheme for embedding a graph as a digital image, we may realize the one-skeleton of this complex as a digital image in some $\Z^n$ with $n \leq 19$ (considerably less should be possible).  Furthermore, the clique complex of this digital image, considered as the graph that we realized,  has clique complex exactly  this simplicial complex, with edge group $G$.  This digital image realizes the group $G$.
\end{example}

\begin{remark}
\thmref{thm: fg realization}, \propref{prop: digital graph},  \exref{ex: projective plane} and \exref{ex: fg example} taken together raise interesting questions.  First, is it the case that every homotopy type may be taken as the spatial realization of a simplicial complex  that is a clique complex?  As we saw in the above example, in some cases at least, triangulations commonly used to represent a space as a simplicial complex need not be clique complexes.  Second, when we do have a homotopy type represented as the spatial realization of some clique complex $\cl(G)$,  we may always display $G$ as a digital image, but the embedding dimension may be quite high.  It would interesting, for example, to know whether it is possible to have a digital image in $\Z^4$ whose clique complex has spatial realization homeomorphic to $\R P^2$.  Generally speaking, even when we have a graph $G$ whose $E(G, v_0)$ gives some group of interest, it does not seem easy to determine the minimal embedding dimension of $G$ as a digital image.  For instance, it is not immediately clear which groups might be obtained as the fundamental groups of 3D digital images.
\end{remark}

\section{Path Shortening and 2D Digital Images}\label{sec: 2D free}

Whilst \thmref{thm: digital pi1 = edge pi1} and  \thmref{thm: edge gp = pi1} allow us to use many results from the topological setting in  the digital setting, they do not automatically resolve all questions about the digital fundamental group.  For example, as just remarked,  it is not immediately clear which groups might be obtained as the fundamental groups of 3D digital images.
Likewise the digital fundamental group of a general 2D image.
In fact we will show in Theorem \ref{thm: pi is free} below that the fundamental group of every 2D digital image is a free group.   Now, the clique complex of a 2D image, generally speaking, is a simplicial complex with simplices of dimension up to $3$.  There is no general reason why such a simplicial complex should have fundamental group that is a free group.  So some argument is required, either in the digital setting or, using  \thmref{thm: digital pi1 = edge pi1} and  \thmref{thm: edge gp = pi1}, in the simplicial complex setting or in the topological setting.  We argue in the digital setting.
To prepare for this result, we establish some basic results about paths and digital circles.

\begin{definition}
Let $X \subseteq \Z^r$ be any digital image.  Suppose we have two points $a, b \in X$ that are non-adjacent.  We say that a set of $n+2$ (distinct) points $P = \{ a, x_1, \ldots, x_n, b\} \subseteq X$ with $n \geq 1$ is a \emph{contractible path in $X$ from $a$ to $b$} of length $n+1$ if we have adjacencies $a \sim_X x_1$, $x_i \sim_X x_{i+1}$ for each $1 \leq i \leq n-1$, and $x_n \sim_X b$, and no other adjacencies amongst the elements of $P$.
\end{definition}

The relationship on pairs of points of having a contractible path from one to the other is clearly symmetric: a contractible path from $a$ to $b$ will serve as a contractible path from $b$ to $a$.
The nomenclature is justified by the following observations.

\begin{lemma}
Suppose we have a set of points $P = \{ a, x_1, \ldots, x_n, b\} \subseteq X$ that  is a \emph{contractible path in $X$ from $a$ to $b$}.
\begin{itemize}
\item[(A)] There is a path $\alpha\colon I_{n+1} \to X$ with $\alpha(0) = a$, $\alpha(n+1) = b$, and $\alpha(i) = x_i$ for $1 \leq i \leq n$.
\item[(B)] This path gives an isomorphism of digital images $I_{n+1} \cong P$
\item[(C)] With $a \in P$ as basepoint, $P$ is a based-contractible subset of $X$ (contractible in itself, not just in $X$).
\end{itemize}
\end{lemma}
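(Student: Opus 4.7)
The proof splits naturally along the three parts (A), (B), (C), and reduces in each case to a verification plus one structural observation. The plan is to treat (A) as a direct construction, (B) as a consequence of (A) together with the ``no other adjacencies'' clause in the definition of a contractible path, and (C) by transporting a standard contraction of $I_{n+1}$ across the isomorphism of (B).

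For (A), the plan is to define $\alpha\colon I_{n+1}\to X$ by the only sensible rule: $\alpha(0)=a$, $\alpha(i)=x_i$ for $1\le i\le n$, and $\alpha(n+1)=b$. Continuity then reduces to checking that consecutive integers in $I_{n+1}$ are sent to adjacent points in $X$, which is precisely the list of adjacencies $a\sim_X x_1$, $x_i\sim_X x_{i+1}$, and $x_n\sim_X b$ guaranteed by the definition of a contractible path. Equal pairs are handled by reflexivity, and no other pairs of $I_{n+1}$ are adjacent.

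For (B), the map $\alpha$ is a bijection onto $P$ because the definition requires the $n+2$ points $a,x_1,\ldots,x_n,b$ to be distinct, and $\alpha$ enumerates all of them. What remains is that the set-theoretic inverse $\alpha^{-1}\colon P\to I_{n+1}$ is continuous, i.e.\ that whenever two distinct elements of $P$ are adjacent in $X$ their $\alpha$-preimages are adjacent in $I_{n+1}$. This is exactly where the ``no other adjacencies amongst the elements of $P$'' hypothesis is essential: the only adjacencies between distinct points of $P$ are the consecutive ones $a\sim x_1$, $x_i\sim x_{i+1}$, $x_n\sim b$, and these all correspond to pairs $(i,i{+}1)$ in $I_{n+1}$. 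Thus $\alpha$ is an isomorphism of digital images, taking the basepoint $0\in I_{n+1}$ to $a\in P$.

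For (C), the plan is to exhibit $I_{n+1}$ as based-contractible at $0$ and then push the contraction across the isomorphism of (B) to obtain a based contraction of $P$ at $a$. An explicit contracting homotopy is
$$H\colon I_{n+1}\times I_{n+1}\to I_{n+1},\qquad H(s,t)=\min\bigl(s,\,(n+1)-t\bigr),$$
which clearly satisfies $H(s,0)=s$, $H(s,n+1)=0$, and $H(0,t)=0$. Continuity is the only point to verify: for $(s,t)\sim(s',t')$ one checks in the four cases determined by whether $s\le (n+1)-t$ or not, and similarly for $(s',t')$, that $|H(s,t)-H(s',t')|\le 1$; the only mildly non-trivial case (mixed inequalities) follows by combining $|s-s'|\le 1$ with $|t-t'|\le 1$. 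Then $\alpha\circ H\circ(\alpha^{-1}\times\mathrm{id})$ is the required based contraction of $P$ at $a$.

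The main (quite mild) obstacle is simply recording an honest contraction of $I_{n+1}$: naive guesses such as $H(s,t)=\max(s-t,0)$ fail continuity because adjacent points $(s,t)\sim(s',t')$ can produce values differing by $2$. The $\min$-based formula above is the fix, and once it is in hand the rest of the argument is mechanical.
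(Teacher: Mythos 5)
Your proposal is correct and follows essentially the same route as the paper: the same direct construction for (A), the same use of distinctness and the ``no other adjacencies'' clause to get a continuous inverse for (B), and for (C) the identical contracting homotopy (the paper's piecewise formula is exactly $H(i,t)=\min(i,\,n+1-t)$), transported across the isomorphism by conjugation just as you do. The only difference is cosmetic: the paper spells out the continuity check for $H$ by a case analysis on whether the coordinate sums lie above or below $n+1$, whereas you invoke the (valid) observation that $\lvert\min(a,b)-\min(a',b')\rvert\le\max(\lvert a-a'\rvert,\lvert b-b'\rvert)$ in the mixed case.
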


\begin{proof}
(A) This point is more or less tautological.  We just need to observe that $\alpha$ as defined is continuous, which is to say that we have $\alpha(i) \sim_X \alpha(i+1)$ for each $0 \leq i \leq n$.  This is part of the data given about $P$.

(B) The path $\alpha \colon I_{n+1} \to P$ has continuous inverse $g\colon P \to I_{n+1}$ given by $g(a) = 0$, $g(n+1) = b$, and $g(x_i) = i$ for $1 \leq i \leq n$.  Notice that this depends on the points of $P$ being distinct from each other (no repeats).

(C)  An interval is based-contractible, via a based contracting homotopy, to any of its points.   In Example 3.13 of \cite{LOS19c}, for example, we give a contracting homotopy $H \colon I_{n+1} \times I_{n+1} \to I_{n+1}$ that satisfies $H(i, 0) = i$ and $H(i, n+1) = 0$, and is a based homotopy in the sense that we also have $H(0, t) = 0$ for all $t \in I_{n+1}$.  The homotopy is defined by
$$H(i, t) = \begin{cases} i  & 0 \leq i \leq n+1-t \\
n+1-t & n+2-t \leq i \leq n+1.\end{cases}$$
This evidently satisfies $H(i, 0) = i$,  $H(i, n+1) = 0$, and $H(0, t) = 0$.  The only issue is whether $H$ is continuous.
Since we omitted the details of the check on continuity in Example 3.13 of \cite{LOS19c} (and also in Example 3.19 of  \cite{LOS19a}), we provide the details here.

To check continuity, suppose that we have $(i, t) \sim_{I_{n+1} \times I_{n+1}} (i', t')$.  We must show that $H(i, t) \sim_{I_{n+1}} H(i', t')$.
If the coordinates $(x, y)$ of both points satisfy $x+y \leq n+1$, then we have $i \leq n+1-t$ and $i' \leq n+1-t'$, and the formula for $H$ gives $| H(i', t') - H(i, t)| = |i' - i| \leq 1$, since $(i, t) \sim (i', t')$ means that we have  $|i' - i| \leq 1$ and $|t' - t| \leq 1$.
If the coordinates $(x, y)$ of both points satisfy $x+y \geq n+1$, then $| H(i', t') - H(i, t)| = |(n+1-t') - (n+1-t)| = |(t-t')| \leq 1$, again because $(i, t) \sim (i', t')$.  The only case that remains, then, is that in which  the coordinates of one point  satisfy $x+y \leq n$ and those of  the other point satisfy  $x+y \geq n+2$.  Since $(i, t) \sim (i', t')$ entails $| (i'+t') - (i+t)| \leq 2$, we must have $i+t = n$ and $i'+t' = n+2$. (There is no loss of generality in writing the point to the lower-left of  the other as $(s, t)$.)  But then we have $t' -t =1$ (as well as $i'-i = 1$), from the adjacency  $(i, t) \sim (i', t')$.  It follows  that  $| H(i', t') - H(i, t)| = |(n+1-t') - i| = |(n+1-t') - (n-t)|= |1-(t'-t)| = 0$.  In all cases, we have  $H(i, t) \sim_{I_{n+1}} H(i', t')$, so $H$ is indeed continuous.

Now contractibility is preserved by an isomorphism of digital images (it is also preserved by other, much more general notions of ``same-ness").  Here, the isomorphisms $\alpha$ and $g$ of part (B) define a homotopy
$$G = \alpha\circ H \circ (g \times \mathrm{id}_{I_{n+1}}) \colon P \times I_{n+1} \to P,$$
that satisfies  $G(p, 0) = \alpha\circ g(p) = p$ and $G(p, n+1) = \alpha(0) = a$ for each $p \in P$.  The homotopy $G$ also satisfies $G(a, t) = \alpha\circ H(0, t) = \alpha(0) = a$ for each $t \in I_{n+1}$, so it is a based contraction of $P$ in the sense asserted.
\end{proof}

If we remove a point from a digital circle, we obtain a contractible path.  Whilst there are many contractible paths that may be ``completed" to a digital circle by the addition of a suitable point, there are examples of contractible paths that may not be completed to a circle, even when we have $a$ and $b$ adjacent to a common point of $X$.

\begin{example}
Take $X\subseteq \Z^2$ by $X=\{(-1,0), (0,0), (1,0)\}$. Then $X$ is a contractible path that cannot be completed to a digital circle by a single point.  This is because the only points not in $X$ adjacent to $a=(-1,0)$ and $b=(1,0)$ are $(0,1)$ and $(0,-1)$, which are adjacent to $(0,0)$.
\end{example}

We have the following ``shortening lemma."

\begin{lemma}\label{lem: contract path}
Let $X \subseteq \Z^n$ be any digital image.   For non-adjacent points $a, b \in X$, if there is path in $X$ from $a$ and $b$, then the path may be shortened to a contractible path in $X$ from $a$ to $b$.
\end{lemma}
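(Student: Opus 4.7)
The plan is to choose a path from $a$ to $b$ in $X$ of minimum length and argue that the set of points it traverses automatically satisfies the definition of a contractible path. Since $X$ is a finite digital image, the set of lengths of paths from $a$ to $b$ has a minimum, so such a shortest path $\alpha\colon I_N \to X$ with $\alpha(0) = a$ and $\alpha(N) = b$ exists. The hypothesis that $a$ and $b$ are non-adjacent rules out $N = 1$, so $N \geq 2$; setting $n = N-1 \geq 1$ and $x_i = \alpha(i)$ gives the candidate set $P = \{a, x_1, \ldots, x_n, b\}$.

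Two conditions must be checked: that the listed points are pairwise distinct, and that no non-consecutive pair is adjacent in $X$. I would handle both by the same short-cut argument against minimality. If $\alpha(i) = \alpha(j)$ for some $i < j$, then deleting the intermediate segment produces the shorter path $\alpha(0), \ldots, \alpha(i), \alpha(j+1), \ldots, \alpha(N)$; the new consecutive pair $\alpha(i), \alpha(j+1)$ is adjacent because $\alpha(j) = \alpha(i)$ was adjacent to $\alpha(j+1)$, and this contradicts minimality. Similarly, if $\alpha(i) \sim_X \alpha(j)$ for some $j \geq i+2$ with $\alpha(i) \neq \alpha(j)$, the short-circuited sequence $\alpha(0), \ldots, \alpha(i), \alpha(j), \ldots, \alpha(N)$ is again a strictly shorter valid path from $a$ to $b$, contradicting minimality. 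Since adjacency in the ambient lattice is reflexive, the first case (with $j = i+1$) also rules out stutters $\alpha(i) = \alpha(i+1)$, so all the special edge cases are covered uniformly.

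Having ruled out both repetitions and non-consecutive adjacencies, the set $P$ consists of $N+1 = n+2$ distinct points with exactly the adjacencies $a \sim x_1$, $x_i \sim x_{i+1}$, and $x_n \sim b$ and no others, which is precisely the definition of a contractible path in $X$ from $a$ to $b$. There is really no substantive obstacle here; the only point that requires mild care is remembering that digital adjacency is reflexive, so a ``shortest path'' must be verified to have no stutters, but this is absorbed into the same argument. The non-adjacency hypothesis on $a$ and $b$ is what guarantees $n \geq 1$, as required by the definition of contractible path.
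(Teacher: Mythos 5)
Your proof is correct. It is essentially the same argument as the paper's, repackaged: the paper takes the given path and explicitly performs the two shortening moves (delete the segment between repeated values, then shortcut across any non-consecutive adjacency) until neither applies, whereas you take a path of minimal length and observe that either move would contradict minimality. The two local moves are identical; only the logical framing (iterate-until-stable versus extremal argument) differs, and your framing is arguably cleaner since it avoids having to argue that the iteration terminates with all unwanted adjacencies removed. One small point of divergence worth noting: the lemma as stated says ``the path may be shortened,'' and the paper's proof literally produces a contractible path whose point set is contained in that of the given path; your minimal path is chosen globally and need not be related to the given one, so you prove the (slightly weaker-sounding) statement that a contractible path from $a$ to $b$ exists whenever some path does. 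This suffices for every application in the paper---in particular for Case 4 of \thmref{thm: pi is free}, provided you apply your argument inside $X - \{x\}$ rather than inside $X$. Also, in your deletion step the case $j = N$ (where $\alpha(i) = b$ for some $i < N$) should be read as truncating the path at $\alpha(i)$ rather than splicing in $\alpha(j+1)$, but this is cosmetic.
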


\begin{proof}
Suppose we have a path $\gamma\colon I_N \to X$ with $\gamma(0) = a$ and $\gamma(N) = b$.  If we have $\gamma(i) = \gamma(i+k)$ for some $k \geq 1$ and $0 \leq i \leq N-1$, then we simply delete the part of the path $\gamma$ between the adjacent values (including one of the repeats).    Specifically, we define a shorter path $\gamma'\colon I_{N-k} \to X$ by
$$\gamma'(s) = \begin{cases} \gamma(s) & 0 \leq s \leq i\\
\gamma(s+k) & i+1 \leq s \leq N-k.\end{cases}$$
These two parts of $\gamma$ join to give a continuous $\gamma'$, since at the join we have $\gamma'(i) = \gamma(i) = \gamma(i+ k)$ and $\gamma'(i+1) = \gamma(i+k+1)$, which are adjacent in $X$ by the continuity of $\gamma$.

By repeating the first step sufficiently many times, we may assume without loss of generality that $\gamma'\colon I_{N'} \to X$ is a (shorter) path from $a$ to $b$ that does not have any repeated values.  Suppose  we have $\gamma'(i) \sim_X \gamma'(i+k')$ for some $k' \geq 2$ and $0 \leq i \leq N-2$, then again we simply delete the part of the path $\gamma'$ between the adjacent values (leaving both adjacent values themselves).    Specifically, we define a shorter path $\gamma''\colon I_{N'-k'+1} \to X$ by
$$\gamma''(s) = \begin{cases} \gamma'(s) & 0 \leq s \leq i\\
\gamma(s+k'-1) & i+1 \leq s \leq N-k'+1.\end{cases}$$
These two parts of $\gamma''$ join to give a continuous $\gamma'$, since at the join we have $\gamma''(i) = \gamma'(i)$ and $\gamma''(i+1) = \gamma'(i+k')$, which are adjacent in $X$ by the assumption on  $\gamma'$.  By repeating this step sufficiently many times, we arrive at a path $\gamma''\colon I_{N''} \to X$ from $a$ to $b$ that satisfies
$$\{ \gamma''(i) \mid 0 \leq i \leq N'' \} \subseteq  \{ \gamma(i) \mid 0 \leq i \leq N \},$$
so it is a ``shortening" of the original path from $a$ to $b$.  It also satisfies $\gamma''(i) \not\sim \gamma''(i+k)$ for $k \geq 2$, for each $0 \leq i \leq N'-2$, and does not contain any repeated values.  The set  $\{ \gamma''(i) \mid 0 \leq i \leq N'' \}$ gives a contractible path in $X$ from $a$ to $b$.
\end{proof}

We have one more ingredient to prepare for our main result.  We recall a definition from \cite{LOS19c}.

\begin{definition}[Based Homotopy Equivalence]\label{def: based h.e.}
Let $f \colon X \to Y$ be a based map of based digital  images.  If there is a based map $g \colon Y \to X$ such that $g\circ f \approx \text{id}_X$ and $f \circ g \approx \text{id}_Y$, then $f$ is a \emph{based-homotopy equivalence}, and $X$ and $Y$ are said to be \emph{based-homotopy equivalent}, or to have the same \emph{based-homotopy type}.
\end{definition}

In this definition, the notation ``$\approx$" denotes based homotopy of based maps, as we recalled in \secref{sec: basics}.  As we remarked in \cite{LOS19c}, the notion of based homotopy equivalence of digital images is often too rigid to be of much use as a notion of ``same-ness" for digital images.  However, in the following result, we do find  a use for it.  It follows easily from Lemma 3.11 of \cite{LOS19c} that if $X$ and $Y$ are based-homotopy equivalent digital images, then their digital fundamental groups are isomorphic.

We are now ready to prove the main result of this section.

\begin{theorem}\label{thm: pi is free}
Let $X \in \Z^2$ be a connected 2D digital image.  Then $\pi_1(X; x_0)$ is a free group.
\end{theorem}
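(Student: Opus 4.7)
The plan is to invoke the isomorphism $\pi_1(X; x_0) \cong \pi_1(|\cl(X)|; x_0)$ provided by \thmref{thm: digital pi1 = edge pi1} together with \thmref{thm: edge gp = pi1}, and then to show this topological fundamental group is free by exploiting the planarity of $X \subseteq \Z^2$. This sidesteps the difficulty (acknowledged in the preamble) that $|\cl(X)|$ is itself up to $3$-dimensional, and it gives up on arguing purely in the digital setting in exchange for a short proof.

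First I would pin down the simplicial structure of $\cl(X)$. With $(3^2-1) = 8$-adjacency in $\Z^2$, any collection of pairwise-adjacent lattice points fits inside a single $2 \times 2$ block of lattice points, so the maximum clique size in $X$ is $4$; moreover, every $4$-clique is the vertex set of a unique unit-square block, and every $3$-clique sits in at most one $4$-clique. Hence $\cl(X)$ is at most $3$-dimensional, with its $3$-simplices in bijection with the unit-square blocks of $X$.

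Next I would construct a ``flat'' realization $P(X) \subseteq \R^2$ of $\cl(X)$ by placing each vertex at its $\Z^2$ position, each $1$-simplex as a straight segment, each $2$-simplex as a filled straight-line triangle, and each $3$-simplex as its (degenerate) filled unit square. The simplex-wise affine surjection $q \colon |\cl(X)| \to P(X)$ is injective on each individual simplex, and on each tetrahedron it collapses the $3$-simplex onto its planar square image via an affine map whose fibres are line segments or single points, all contractible. Since each such collapse is a homotopy equivalence (contractible to contractible) and the collapses agree on common boundary simplices, $q$ is a global homotopy equivalence. Hence $\pi_1(X; x_0) \cong \pi_1(P(X); x_0)$. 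Finally, $P(X)$ is a compact, connected $2$-dimensional subcomplex of $\R^2$, so a regular neighbourhood $N$ of $P(X)$ in $\R^2$ is a compact PL $2$-manifold with non-empty boundary (non-empty because $N$ is bounded in $\R^2$), and $P(X)$ is a deformation retract of $N$. Every compact $2$-manifold with non-empty boundary has free fundamental group, since it collapses onto a $1$-dimensional spine. Therefore $\pi_1(P(X))$, and hence $\pi_1(X; x_0)$, is free.

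The main obstacle will be making precise the homotopy equivalence $|\cl(X)| \simeq P(X)$ across neighbouring tetrahedra that share $2$-simplices: the collapses on adjacent tetrahedra must be compatible on their common $2$-faces. This is handled by noting that the affine collapses already agree on shared faces by construction, so no coherent-choice argument is really needed beyond bookkeeping. A secondary technical point is the PL regular-neighbourhood argument and the invocation of surface classification, both standard but requiring care to ensure we are working with a genuinely planar $2$-complex (which the construction of $P(X)$ guarantees). Everything else—the rank of the resulting free group in terms of the bounded complementary regions of $P(X)$ in $\R^2$—is a bonus that one might extract afterwards but is not needed for the bare statement.
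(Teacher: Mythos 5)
Your route is genuinely different from the paper's. The paper never leaves the digital setting for this theorem: it inducts on the number of points of $X$, removes the lexicographically maximal point $x$, and runs a case analysis on $\lk(x)$, using deformation retractions in two of the cases and the digital Seifert--van Kampen theorem (\corref{cor: contractible U cap V}, together with \lemref{lem: contract path} and \thmref{thm: pi of C is Z}) in the others; freeness comes out because at each stage the group is a free product of free groups. Your plan instead passes through \thmref{thm: digital pi1 = edge pi1} and \thmref{thm: edge gp = pi1}, flattens $|\cl(X)|$ into the plane, and invokes the classical fact that a compact planar polyhedron has free fundamental group. If it works, it buys a shorter proof and a geometric explanation of \emph{why} the group is free (planarity); the paper's argument buys a self-contained digital induction and, implicitly, a recipe for computing the free group. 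Your preliminary observations are sound: every clique lies in a $2\times 2$ block, so $\cl(X)$ has dimension at most $3$, each $4$-clique is the vertex set of a unique unit square, and each $3$-clique lies in at most one $4$-clique.

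The genuine gap is the assertion that $q\colon |\cl(X)|\to P(X)$ is a homotopy equivalence. As written, the inference ``each collapse is a homotopy equivalence of contractibles, the collapses agree on common boundary simplices, hence $q$ is a global homotopy equivalence'' is not valid: a map with contractible fibres need not be a homotopy equivalence, and gluing piecewise homotopy equivalences requires control of the overlaps (a homotopy pushout/gluing theorem), not merely agreement on them. The danger is real because $q$ is far from injective on overlapping data: the two diagonals of a filled unit square cross in $P(X)$ but are disjoint edges in $|\cl(X)|$, and the four $2$-faces of a tetrahedron have overlapping planar images. (Your own text is inconsistent here, claiming $q$ is ``injective on each individual simplex'' and then describing segment fibres on the $3$-simplices.) The repair is to induct on maximal simplices: if $\sigma$ is a maximal simplex and $K'$ the union of the others, the bounding-box analysis shows $K'\cap|\sigma|$ consists only of vertices and non-diagonal edges (on which $q$ \emph{is} injective), and that $q(K')\cap q(|\sigma|)=q(K'\cap|\sigma|)$; then the gluing theorem for homotopy equivalences along cofibrations applies. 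With that step supplied, the remaining regular-neighbourhood argument is standard and the proof goes through.
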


\begin{proof}
We argue by induction on the (finite) number of points $N$ in the digital image.  Induction starts with $N= 1, 2,$ or $3$, where there is nothing to prove ($X$ is contractible to a point in these cases, so has $\pi_1(X;x_0) \cong \{\mathbf{e}\}$).

So assume inductively that, for any 2D digital image with $n$ or fewer points, the fundamental group is free.  Now suppose $X$ is a digital image with $n+1$ points.

We may totally order the points of $X$ by lexicographic order.  That is, $(x_1, y_1) > (x_2, y_2)$ if $x_1 > x_2$, and  $(x, y_1) > (x, y_2)$ if $y_1 > y_2$.  Suppose that $x\in X$ is the maximal point in this ordering, so that there are no points of $X$ with a greater first coordinate, and the only points with the same first coordinate as that of $x$ have smaller second coordinate.     The possible neighbours of $x$ in $X$ are illustrated as follows (there are at most $4$ of them):
$$
\begin{tabular}{c|c}
$a$ & \\
\hline \\
$c$ & $x$ \\
\hline \\
$b_1$ & $b_2$ \\
\end{tabular}
$$
The \emph{link of $x$}, which we denote by $\mathrm{lk}(x)$, is that subset of $\{ a, c, b_1, b_2 \}$ consisting of those points present in $X$.   First note that in the exceptional case in which $X = \{x\} \cup \mathrm{lk}(x)$, which would entail $n$ being relatively small, and $X$ consisting of at most the $5$ points illustrated, then $X$ itself will be contractible, with trivial fundamental group.  So from now on, assume that we have points in $X$ in addition to those of $\{x\} \cup \mathrm{lk}(x)$.  Furthermore, $\mathrm{lk}(x)$ must be non-empty, otherwise $X$ would be disconnected; we assume a choice of basepoint in  $\mathrm{lk}(x)$.
We divide and conquer, based on the form of this link.

\textbf{Case 1: $c \in \mathrm{lk}(x)$.}  In this case, we claim that $X$ is based-homotopy equivalent to $X - \{x\}$.
In fact we show that $X - \{x\}$ is a deformation retract of $X$.  Define a retraction $r\colon X \to X - \{x\}$ on each $y \in X$ by
$$r(y) = \begin{cases}  y & y \not= x\\ c & y = x.\end{cases}$$
Let $i \colon X - \{x\} \to X$ denote the inclusion. We have $r\circ i = \mathbf{id}\colon X - \{x\}  \to X - \{x\}$.  We claim that
$$H(y, t) =  \begin{cases}  y & t=0\\ i\circ r(y) & t=1\end{cases}$$
defines a (continuous) homotopy $H \colon X \times I_1 \to X$.   To confirm continuity, suppose we have $(y, t) \sim (y', t')$ in   $X \times I_1 $.  If neither $y$ nor $y'$ are $x$, then we have $H(y, t) = y$ and $H(y', t') = y'$, which are adjacent because $(y, t) \sim (y', t')$ implies $y \sim y'$ in $X$.  If both $y$ and $y'$ are $x$, then we have $H(x, 0) = x$ and $H(x, 1) = c$, which are adjacent. So the only remaining adjacencies we need check are for $H(x, 0)$ and  $H(y', t')$ and for $H(x, 1)$ and  $H(y', t')$ with $y' \not= x$ but $y' \sim_X x$.  But then we have $H(x, 0) = x \sim y' = H(y', t')$, and $H(x, 1) = c \sim y' = H(y', t')$.  This latter follows because the only possibilities for $y' \sim x$ are
from $\mathrm{lk}(x)$, and each of these is also adjacent to $c$.
This completes the check of the continuity of $H$, so it is a homotopy $\mathbf{id}_X \approx i\circ r\colon X \to X$. With a choice of basepoint in $\mathrm{lk}(x)$, $H$ is evidently a based homotopy $\mathbf{id}_X \approx i\circ r$.  Then, as claimed,   $X$ is based-homotopy equivalent to $X - \{x\}$ and thus these two digital images have isomorphic fundamental groups.  Since  $X - \{x\}$ has $n$ vertices, its fundamental group is a free group, by our induction hypothesis, and so this establishes the induction step in this case. 

For the remainder of the argument, we suppose that $c$ is absent, so that $\mathrm{lk}(x) \subseteq \{ a,  b_1, b_2 \}$.

\textbf{Case 2: $\{ b_1, b_2 \} \subseteq  \in \mathrm{lk}(x)$.}  In this case, we claim that $X$ is based-homotopy equivalent to $X - \{b_2\}$.
Again,  we show that $X - \{b_2\}$ is a deformation retract of $X$.  This is similar to the previous case,  Define a retraction $r\colon X \to X - \{b_2\}$ on each $y \in X$ by
$$r(y) = \begin{cases}  y & y \not= b_2\\ b_1 & y = b_2.\end{cases}$$
Let $i \colon X - \{b_2\} \to X$ denote the inclusion. We have $r\circ i = \mathbf{id}\colon X - \{b_2\}  \to X - \{b_2\}$.  We claim that
$$H(y, t) =  \begin{cases}  y & t=0\\ i\circ r(y) & t=1\end{cases}$$
defines a (continuous) homotopy $H \colon X \times I_1 \to X$.   To confirm continuity, suppose we have $(y, t) \sim (y', t')$ in   $X \times I_1 $.  If neither $y$ nor $y'$ are $b_2$, then we have $H(y, t) = y$ and $H(y', t') = y'$, which are adjacent because $(y, t) \sim (y', t')$ implies $y \sim y'$ in $X$.  If both $y$ and $y'$ are $b_2$, then we have $H(b_2, 0) = b_2$ and $H(b_2, 1) = b_1$, which are adjacent. So the only remaining adjacencies we need check are for $H(b_2, 0)$ and  $H(y', t')$ and for $H(b_2, 1)$ and  $H(y', t')$ with $y' \not= b_2$ but $y' \sim_X b_2$.  But then we have $H(b_2, 0) = b_2 \sim y' = H(y', t')$, and $H(b_2, 1) = b_1 \sim y' = H(y', t')$.  This latter follows because the only possibilities for $y \sim b_2$ are
from $\{ x, b_1, z_1, z_2 \}$ (see the figure below, and recall that there are no points in $X$ with first coordinate greater than that of $x$---also, we are supposing  $c$ is absent from $X$, but it does not affect the argument here even if we include it), and each of these is also adjacent to $b_1$.
$$
\begin{tabular}{c|c}
$a$ & \\
\hline \\
$c$ & $x$ \\
\hline \\
$b_1$ & $b_2$ \\
\hline \\
$z_1$ & $z_2$ \\
\end{tabular}
$$
This completes the check of the continuity of $H$, so it is a homotopy $\mathbf{id}_X \approx i\circ r\colon X \to X$. If we choose, say, basepoint $b_1$, then $H$ is evidently a based homotopy $\mathbf{id}_X \approx i\circ r$.  Then the induction step goes through in this case just as in the previous case.

\textbf{Case 3: $\mathrm{lk}(x) = \{ a\}$, $\mathrm{lk}(x) = \{ b_1\}$, or $\mathrm{lk}(x) = \{ b_2\}$.}  
 In this case, choose whichever point is in  $\mathrm{lk}(x)$ as basepoint, and set $U = \{x\} \cup \mathrm{lk}(x)$ and $V = X - \{x\}$.  Then $X = U \cup V$ and $U \cap V = \mathrm{lk}(x)$, a single point (the basepoint).  Furthermore, the complements $U' = X -  (\{x\} \cup \mathrm{lk}(x))$ and $V' = \{ x\}$ are disconnected.  It follows from \corref{cor: contractible U cap V}
that  we have
 $$\pi_1(X; x_0) \cong \pi_1(U; x_0) \ast \pi_1(V; x_0),$$
the free product of groups. Furthermore, $U$ is isomorphic to the unit interval $I_1$, which is (based) contractible and so we have  $\pi_1(U; x_0) \cong \{\mathbf{e}\}$.  Thus $\pi_1(X; x_0) \cong \pi_1(V; x_0)$, and our induction hypothesis gives that  $\pi_1(V;x_0)$ is a free group, as $V$ has fewer points than $X$.  The  induction step is complete in this case also.

The only remaining possibilities for the link of $x$, now are $\mathrm{lk}(x) = \{ a, b_1 \}$ and $\mathrm{lk}(x) = \{ a,  b_2 \}$.  Notice that, here, we cannot use the same sets $U$ and $V$ to decompose $X$ as in the previous case, since \thmref{thm: DVK} requires a \emph{connected} intersection $U \cap V$. Instead, we take up each of these cases with an  argument that uses the contractible path material earlier in the section.

\textbf{Case 4: $\mathrm{lk}(x) = \{ a, b_1 \}$.}  Recall that we assume $X \not= \{x\} \cup \mathrm{lk}(x)$ and so there are points in $X$ other than those adjacent to $x$, and there exists a path from $x$ to those points but only by passing through either $a$ or $b_1$.  There are two sub-cases: (W) in which $a$ and $b_1$ are not connected by a path in $X - \{x\}$; and (C) in which  $a$ and $b_1$ are connected by a path in $X - \{x\}$.  Take sub-case (W) first.  Here, $X - \{x\}$ must fall into the two components defined by
$$X_a = \big\{ p \in X - \{x\} \mid p \text{ is connected to } a \text{ by  a path in }  X - \{x\} \big\}$$
and
$$X_{b_1} = \big\{ p \in X - \{x\} \mid p \text{ is connected to } b_1 \text{ by a path in }  X - \{x\} \big\},$$
the first of which contains $a$ and the second of which contains $b_1$.  We explain this assertion as follows.  Take $y \in  X - \{x\}$ and suppose that $y \not\in X_a$.  Since $X$ is connected, there is some path from $y$ to $b_1$ in $X$.  If this path does not contain $x$, then it is a path in $X - \{x\}$ from $y$ to $b_1$ and so $y \in X_b$. Otherwise, consider the first occurrence of $x$ in the path from $y$ to $b_1$.  The part of the path from $y$ to the point preceding this first occurrence of $x$ is a path in $X - \{x\}$ from $y$ to a point of $\mathrm{lk}(x)$.  But if this point is $a$, we would have $y \in X_a$, which we said was not the case.  Therefore, it is $b_1$ and we have $y \in X_{b_1}$.  Furthermore, not only do we have $X - \{x\} = X_a \cup X_{b_1}$ but these components must be disconnected, in the sense that no point of $X_a$ is adjacent to any point of $X_{b_1}$.  For if we were to have $p \in X_a$ and $q \in X_{b_1}$ with $p \sim q$, then we could concatenate a path in  $X - \{x\}$ from $a$ to $p$ with a path in  $X - \{x\}$ from $q$ to $b_1$, to obtain a path in $X - \{x\}$ from $a$ to $b_1$.  But we are currently assuming there is no such path.  From all this, it follows that if we set $U = X_a \cup \{x\}$ and $V = X_{b_1}\cup \{x\}$, then $U \cap V = \{ x \}$.  So take the basepoint as $x_0 = x$, and notice that the complements $U' = X_{b_1}$ and $V' = X_a$ are disconnected, by the preceding discussion.  In effect, we have identified $X$ as a one-point union $U \vee V$.  It follows from \corref{cor: contractible U cap V}
that  we have
 $$\pi_1(X; x_0) \cong \pi_1(U; x_0) \ast \pi_1(V; x_0).$$
Snce $U$ and $V$ both contain fewer points than $X$, our induction hypothesis gives that their fundamental groups are free groups.  The free product of free groups is again a free group, and the induction step is complete in this sub-case (W).  

Now go back to sub-case (C), in which $a$ and $b_1$ are connected by a path in $X - \{x\}$.  By \lemref{lem: contract path} we may shorten this path to a contractible path in $X - \{x\}$.  So without loss of generality, suppose we have a contractible path $P$ in $X - \{x\}$ from $a$ to $b_1$.  Now set $U = P \cup \{x\}$ and $V = X - \{x\}$.  Then $U \cap V = P$, which is connected and contractible.  If $X \not= P \cup \{x\}$, choose $a = x_0$ and observe that the complements $U' =  X - (P \cup \{x\})$ and $V' = \{x\}$ are disconnected.  We may apply \corref{cor: contractible U cap V} again to obtain
 $$\pi_1(X; x_0) \cong \pi_1(U; x_0) \ast \pi_1(V; x_0).$$
Then both  $U$ and $V$ contain fewer points than $X$, and it follows as in the previous sub-case that $\pi_1(X; x_0)$ is a free group. However, it is possible that we have $X = P \cup \{x\}$, in which case $V$ will have (one) fewer points than $X$, but $U$ will have the same number, and we are not able to apply our inductive hypotheses to $U$.  But in this situation, notice that $U = P \cup \{x\}$  is a digital circle, with $\pi_1(U; a) \cong \Z$ by \thmref{thm: pi of C is Z}.   Now $\Z$ is a free group, and our inductive hypothesis applied to $V$ yields
$\pi_1(X; x_0) \cong  \Z \ast \pi_1(V; x_0)$: a free product of free groups, and hence a free group.  So we have closed the induction in sub-case (C).  This completes the induction in Case 4.

\textbf{Case 5: $\mathrm{lk}(x) = \{ a, b_2 \}$.}  
This case may be handled with an argument identical to that just used for Case 4, only replacing $b_1$ with $b_2$.  We omit the details.

This exhausts all cases for the link of $x$, and completes the induction.  The result follows.
\end{proof}


\providecommand{\bysame}{\leavevmode\hbox to3em{\hrulefill}\thinspace}
\providecommand{\MR}{\relax\ifhmode\unskip\space\fi MR }
\providecommand{\MRhref}[2]{%
  \href{http://www.ams.org/mathscinet-getitem?mr=#1}{#2}
}
\providecommand{\href}[2]{#2}

\end{document}